\theoremstyle{plain}
\newtheorem{thm}{Theorem}
\newtheorem{lem}{Lemma}
\newtheorem{cor}{Corollary}
\newtheorem{prop}{Proposition}
\theoremstyle{definition}
\newtheorem{dfn}{Definition}
\newtheorem{ex}{Example}
\theoremstyle{remark}
\newtheorem{rem}{Remark}
\newtheorem*{opp}{Open Problem}
\newtheorem*{ackn}{Acknowledgment}
\newcommand{\C}{\mathbb{C}}
\title{Quadratic functions as solutions of polynomial equations}
\author{Eszter Gselmann and Mehak Iqbal}
\begin{document}
\maketitle

\begin{abstract}
The so-called polynomial equations play an important role both in algebra and in the theory of functional equations. If the unknown functions in the equation are additive, relatively many results are known. However, even in this case, there are a lot of open questions. 
In some specific cases, according to classical results, the unknown additive functions are homomorphisms, derivations, or linear combinations of these. 
The question arises as to whether the solutions can be described even if the unknown functions are not assumed to be additive but to be generalized monomials. As a starting point, in this paper, we will deal with quadratic functions. We aim to show that quadratic functions that are solutions to certain polynomial equations necessarily have a `special' form. Further, we also present a method to determine these special forms. 
\end{abstract}

\section{Introduction and preliminaries}

The description and characterization of different morphisms in given algebraic structures is an important issue in algebra. Further, since these questions mean the fulfillment of certain identities, such investigations can also be relevant from the point of view of the theory of functional equations.

 Recall that if  $R, R'$ are rings, then the mapping $\varphi \colon R\rightarrow R'$ is called a \emph{homomorphism} if
\[
 \varphi(a+b)=\varphi(a)+\varphi(b)
\qquad \left(a, b\in R\right)
\]
and
\[
 \varphi(ab)=\varphi(a)\varphi(b)
\qquad \left(a, b\in R\right).
\]
Furthermore, ,the function $\varphi:R\to R'$ is an
\emph{anti-homomorphism} if
\[
 \varphi(a+b)=\varphi(a)+\varphi(b)
\qquad \left(a, b\in R\right)
\]
and
\[
 \varphi(ab)=\varphi(b)\varphi(a)
\qquad \left(a, b\in R\right).
\]

Let $n\geq 2$ be a fixed positive integer. 
The function $\varphi\colon R\to R'$ is called an $n$-Jordan
homomorphism if
\[
\varphi(a+b)=\varphi(a)+\varphi(b) \qquad \left(a, b\in R\right)
\]
and
\[
 \varphi(a^{n})=\varphi(a)^{n}
\qquad \left(a\in R\right).
\]
In case $n=2$ we speak about homomorphisms
and Jordan homomorphisms, respectively. It was G.~Ancochea who
first dealt with the connection of Jordan homomorphisms and
homomorphisms, see \cite{Anc42}. These results were
generalized and extended in several ways, see for instance
\cite{JR}, \cite{Kap}, \cite{Zel68}. 
According to a classical result \cite{Her} of I.N.~Herstein,  
if $\varphi$ is a Jordan homomorphism of a ring $R$
\emph{onto} a prime ring $R'$ of characteristic different from $2$
and $3$, then either $\varphi$ is a homomorphism or an
anti-homomorphism. 

Besides homomorphisms, derivations also play a key role in the theory of rings and fields. Concerning this notion, we will follow \cite[Chapter 14]{Kuc}. 

Let $Q$ be a ring and let $P$ be a subring of $Q$.
A function $d\colon P\rightarrow Q$ is called a \emph{derivation}\index{derivation} if it is additive,
i.e. 
\[
d(x+y)=d(x)+d(y)
\quad
\left(x, y\in P\right)
\]
and also satisfies the so-called \emph{Leibniz rule}\index{Leibniz rule}, i.e.  equation
\[
d(xy)=d(x)y+xd(y)
\quad
\left(x, y\in P\right). 
\]

It is well-known that in the case of additive functions, Hamel bases play an important role. 
As \cite[Theorem 14.2.1]{Kuc} shows in the case of derivations, algebraic bases are fundamental. 

\begin{thm}\label{T14.2.1}
Let $(\mathbb{K}, +,\cdot)$ be a field of characteristic zero, let $(\mathbb{F}, +,\cdot)$
be a subfield of $(\mathbb{K}, +,\cdot)$, let $S$ be an algebraic base of $\mathbb{K}$ over $\mathbb{F}$,
if it exists, and let $S=\emptyset$ otherwise.
Let $f\colon \mathbb{F}\to \mathbb{K}$ be a derivation.
Then, for every function $u\colon S\to \mathbb{K}$,
there exists a unique derivation $g\colon \mathbb{K}\to \mathbb{K}$
such that $g \vert_{\mathbb{F}}=f$ and $g \vert_{S}=u$.
\end{thm}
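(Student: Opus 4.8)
The plan is to build $g$ in two stages: first extend $f$ from $\mathbb{F}$ to the field $\mathbb{F}(S)$ of rational expressions in $S$, incorporating the prescribed values $u$, and then extend the resulting derivation from $\mathbb{F}(S)$ to the algebraic extension $\mathbb{K}$, where no further freedom is available. The case $S=\emptyset$ is the special case in which the first stage is vacuous.

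\textbf{Stage 1 (transcendental part).} Since $S$ is algebraically independent over $\mathbb{F}$, every element of $\mathbb{F}[S]$ has a unique representation as a finite $\mathbb{F}$-linear combination $P=\sum_{\alpha} a_\alpha m_\alpha$ of monomials $m_\alpha$ in finitely many members of $S$. Define
\[
g_0(P) = \sum_\alpha f(a_\alpha)\,m_\alpha + \sum_{s\in S} u(s)\,\partial_s P,
\]
where $\partial_s$ denotes the formal partial derivative with respect to $s$ (only finitely many variables occur, so the second sum is finite). A direct computation on monomials shows $g_0$ is additive and satisfies the Leibniz rule on $\mathbb{F}[S]$, while plainly $g_0\vert_{\mathbb{F}}=f$ and $g_0(s)=u(s)$ for $s\in S$. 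One then extends $g_0$ to the quotient field $\mathbb{F}(S)$ by the forced formula $g_0(P/Q)=\big(g_0(P)Q-Pg_0(Q)\big)/Q^2$; independence of the chosen fraction and the derivation property follow from the Leibniz rule on $\mathbb{F}[S]$ in the usual manner.

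\textbf{Stage 2 (algebraic part).} By the definition of an algebraic base, $\mathbb{K}$ is algebraic over $M:=\mathbb{F}(S)$. The key lemma is: \emph{if $M\subseteq L\subseteq\mathbb{K}$ with $L$ algebraic over $M$ and $d\colon M\to\mathbb{K}$ is a derivation, then $d$ admits a unique extension to a derivation $L\to\mathbb{K}$.} For a single $x\in L$ with minimal polynomial $p(t)=\sum_i c_i t^i\in M[t]$, differentiating the identity $p(x)=0$ forces $d(x)=-\big(\sum_i d(c_i)x^i\big)/p'(x)$; here $p'(x)\neq 0$ because $\operatorname{char}\mathbb{K}=0$ and $p$ is irreducible, so this value is well-defined, and checking the Leibniz rule on $M(x)$ confirms uniqueness of the extension to $M(x)$. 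Existence over a general algebraic $L$ follows by Zorn's lemma applied to the poset of pairs $(L',d')$, where $M\subseteq L'\subseteq L$ is an intermediate field and $d'\colon L'\to\mathbb{K}$ is a derivation extending $d$, ordered by extension: a maximal element must have domain all of $L$, since otherwise adjoining one element of $L\setminus L'$ via the single-element step above would contradict maximality. Applying this with $L=\mathbb{K}$ and $d=g_0$ produces the desired $g$, and $g\vert_{\mathbb{F}}=f$, $g\vert_{S}=u$ since these already held for $g_0$. Uniqueness of $g$ is immediate: any derivation with the prescribed restrictions is forced on $\mathbb{F}[S]$ by additivity and Leibniz, hence on $\mathbb{F}(S)$ by the quotient rule, hence on all of $\mathbb{K}$ by the uniqueness clause of the lemma.

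\textbf{Main obstacle.} The substantive work lies in the well-definedness of the two extension steps: that $g_0$ on $\mathbb{F}[S]$ is independent of the monomial representation and satisfies Leibniz (the monomial computation), that the quotient-rule extension to $\mathbb{F}(S)$ does not depend on the chosen fraction, and — the most delicate point — that the Zorn-type argument in Stage 2 genuinely reaches all of $\mathbb{K}$ rather than stalling at some maximal intermediate field. This last point is exactly where irreducibility of minimal polynomials together with characteristic zero (ensuring $p'(x)\neq 0$) is indispensable; without separability the forced value of $d(x)$ could fail to exist.
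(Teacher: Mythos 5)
Your proposal is correct and follows essentially the same route as the source: the paper states this theorem without proof, quoting it as Theorem 14.2.1 of Kuczma's monograph, and your two-stage argument --- the coefficientwise-$f$ plus $\sum_{s\in S}u(s)\,\partial_s$ formula on $\mathbb{F}[S]$, the quotient rule on $\mathbb{F}(S)$, and then the forced extension through the algebraic (hence, in characteristic zero, separable) extension $\mathbb{F}(S)\subset\mathbb{K}$, with Zorn's lemma to exhaust $\mathbb{K}$ --- is the classical proof given there. The only wording to adjust is in Stage 2: the forced value of $d(x)$ gives \emph{uniqueness}, whereas the Leibniz check on $M(x)\cong M[t]/(p)$ (well-definedness via $D(p)=0$) is what establishes \emph{existence}; your sketch contains both ingredients, just with the labels swapped.
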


Similar to homomorphisms, characterization theorems related to derivations also have extensive literature, see e.g.~the monographs \cite{Kuc, ZarSam75}. 

According to a classical result in connection to derivations, if $\mathbb{F}$ is a subfield of the field $\mathbb{K}$ with characteristic zero, $P\in \mathbb{F}[x]$ is a polynomial and the additive function $a\colon \mathbb{F}\to \mathbb{K}$ fulfills 
\[
   a(P(x))= P'(x)a(x) 
   \qquad 
   \left(x\in \mathbb{F}\right), 
\]
then $a$ is a derivation. 

Note that all the above problems can be viewed as special cases of a more general problem. Indeed, let $\mathbb{K}$ be a field of characteristic zero and $\mathbb{F}\subset \mathbb{K}$ be a subfield, let further $P\in \mathbb{F}[x]$ and $Q\in \mathbb{K}[x_{1}, x_{2}]$ be given polynomials and $a\colon \mathbb{F}\to \mathbb{K}$ be an additive function such that 
\[
a(P(x))=Q(x, a(x)) 
\qquad 
\left(x\in \mathbb{F}\right). 
\]
The question arises: Does the above identity imply that this additive function $a$ has some `special form'? For certain polynomials $P$ and $Q$, in the case of classical results, the unknown additive function $a$ is a homomorphism, a derivation, or a linear combination of these. Naturally, the question arises as to whether this is the case for all polynomials $P$ and $Q$.  
Similar questions can be raised about generalized monomial functions instead of additive functions: assume that $n$ and $k$ are positive integers, $P_{i, j}\in \mathbb{F}[x]$ and $P\in \mathbb{K}[z, x_{1, 1}, \ldots, x_{n, k}]$ are given polynomials for $i=1, \ldots, n; j=1, \ldots, k$. Suppose further that $f_{1}, \ldots, f_{n}\colon \mathbb{F}\to \mathbb{K}$ are generalized monomials (of possibly different degree) such that 
\[
\tag{$\bullet$}\label{bullet}
P(x, f_{1}(P_{1, 1}(x)), \ldots, f_{1}(P_{1, k}(x)), \ldots, f_{n}(P_{n, 1}(x)), \ldots, f_{n}(P_{n, k}(x)))=0
\]
holds for all $x\in \mathbb{F}$? Is it true that the monomial functions here necessarily have a `special' form? If so, is there a method to determine these special forms?

In the case where the unknown generalized monomial functions are additive, some results can be found e.g. in \cite{Eba15, Eba17, Eba18, EbaRieSah, GseKisVin18, GseKisVin19}.  The papers \cite{Gse22, GseIqb23} contain related results, but there the unknown functions were assumed to be quadratic. This paper is a continuation of these.  

We also mention papers \cite{Amo20, BorGar18, BorGar23AM, BorMen23AM, BorMen23AMS} where the authors also dealt with quadratic functions that satisfy additional identities, typically an alternative equation, or a conditional equation. 
As the papers \cite{AicMoo21} and \cite{Almira22} (and also their references) show, such and similar so-called polynomial equations as $(\bullet)$ can appear in many areas of mathematics. Of course, most often in algebra, the theory of functional equations, but also in probability theory or statistics.

One of the main difficulties in solving equation \eqref{bullet} is that it contains a single independent variable but $n$ unknown functions. Thus, the primary goal is usually to be able to increase the `degree of freedom' provided by this independent variable. In other words, we would like to set up an equation for the unknown functions that is equivalent to equation \eqref{bullet} but contains many more independent variables. This is provided by the assumption that the functions $f_{1}, \ldots, f_{n}$ in the equation are \emph{monomial} functions. More precisely, the so-called Polarization formula gives the possibility that (if certain additional conditions are met) the unknown functions in the equation satisfy a Levi-Civita equation 
\[
f(xy)= \sum_{i=1}^{k}g_{i}(x)h_{i}(y)
\]
on $\mathbb{F}^{\times}$, i.e. on the \emph{multiplicative} structure of the domain. Therefore, our main objective in this paper is to determine all those quadratic functions $q$ that satisfy a Levi-Civita equation on the multiplicative structure, i.e., that can be written as
\[
q(xy)=  \sum_{i=1}^{k}g_{i}(x)h_{i}(y)
\]
with some positive integer $k$ and with some appropriate functions $g_{i}, h_{i}$, $i=1, \ldots, k$. For this, those quadratic functions $q$ that satisfy the equations
\[
q(xy)= q(x)q(y) 
\qquad 
\left(x, y\in \mathbb{F}^{\times}\right)
\]
and 
\[
q(xy)= x^{2}q(y)+q(x)y^{2} 
\qquad 
\left(x, y\in \mathbb{F}^{\times}\right), 
\]
respectively, must first be determined.

\subsection*{Generalized polynomial functions}

While proving our results, the so-called Polarization formula for multi-additive functions and the symmetrization method will play a key role. In this subsection, the most important notations and statements are summarized based on the monograph \cite{Sze91}.

\begin{dfn}
 Let $G, S$ be commutative semigroups, $n\in \mathbb{N}$ and let $A\colon G^{n}\to S$ be a function.
 We say that $A$ is \emph{$n$-additive} if it is a homomorphism of $G$ into $S$ in each variable.
 If $n=1$ or $n=2$ then the function $A$ is simply termed to be \emph{additive}
 or \emph{bi-additive}, respectively.
\end{dfn}

The \emph{diagonalization} or \emph{trace} of an $n$-additive
function $A\colon G^{n}\to S$ is defined as
 \[
  A^{\ast}(x)=A\left(x, \ldots, x\right)
  \qquad
  \left(x\in G\right).
 \]
As a direct consequence of the definition each $n$-additive function
$A\colon G^{n}\to S$ satisfies
\[
 A(x_{1}, \ldots, x_{i-1}, kx_{i}, x_{i+1}, \ldots, x_n)
 =
 kA(x_{1}, \ldots, x_{i-1}, x_{i}, x_{i+1}, \ldots, x_{n})
 \qquad 
 \left(x_{1}, \ldots, x_{n}\in G\right)
\]
for all $i=1, \ldots, n$, where $k\in \mathbb{N}$ is arbitrary. The
same identity holds for any $k\in \mathbb{Z}$ provided that $G$ and
$S$ are groups, and for $k\in \mathbb{Q}$, provided that $G$ and $S$
are linear spaces over the rationals. For the diagonalization of $A$
we have
\[
 A^{\ast}(kx)=k^{n}A^{\ast}(x)
 \qquad
 \left(x\in G\right).
\]

The above notion can also be extended for the case $n=0$ by letting 
$G^{0}=G$ and by calling $0$-additive any constant function from $G$ to $S$. 

One of the most important theoretical results concerning
multi-additive functions is the so-called \emph{Polarization
formula}, that briefly expresses that every $n$-additive symmetric
function is \emph{uniquely} determined by its diagonalization under
some conditions on the domain as well as on the range. Suppose that
$G$ is a commutative semigroup and $S$ is a commutative group. The
action of the {\emph{difference operator}} $\Delta$ on a function $f\colon G\to S$ is defined by
\[\Delta_y f(x)=f(x+y)-f(x)
\qquad
\left(x, y\in G\right). \]
Note that the addition in the argument of the function is the
operation of the semigroup $G$ and the subtraction means the inverse
of the operation of the group $S$.

\begin{thm}[Polarization formula]\label{Thm_polarization}
 Suppose that $G$ is a commutative semigroup, $S$ is a commutative group, $n\in \mathbb{N}$.
 If $A\colon G^{n}\to S$ is a symmetric, $n$-additive function, then for all
 $x, y_{1}, \ldots, y_{m}\in G$ we have
 \[
  \Delta_{y_{1}, \ldots, y_{m}}A^{\ast}(x)=
  \left\{
  \begin{array}{rcl}
   0 & \text{ if} & m>n \\
   n!A(y_{1}, \ldots, y_{m}) & \text{ if}& m=n.
  \end{array}
  \right.
 \]

\end{thm}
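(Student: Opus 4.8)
The plan is to argue by induction on $n$. The base case is immediate: for $n=1$ the trace is $A^{\ast}=A$, so $\Delta_{y_1}A^{\ast}(x)=A(y_1)$, which is the $m=1$ statement, and any further difference operator annihilates this constant, which is the $m>1$ statement (the degenerate $n=0$ case, $A$ constant, is the same check). So assume the formula is already established for symmetric $(n-1)$-additive functions and let $A\colon G^{n}\to S$ be symmetric and $n$-additive.

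The first step is to expand a single difference of the trace. Fixing $y_1$, distributing $A(x+y_1,\ldots,x+y_1)$ over each of the $n$ arguments, and grouping equal terms by symmetry gives
\[
A^{\ast}(x+y_1)=\sum_{k=0}^{n}\binom{n}{k}\,A\bigl(\underbrace{x,\ldots,x}_{n-k},\underbrace{y_1,\ldots,y_1}_{k}\bigr),
\]
hence $\Delta_{y_1}A^{\ast}=\sum_{k=1}^{n}\binom{n}{k}\,B_k^{\ast}$, where $B_k(x_1,\ldots,x_{n-k}):=A(x_1,\ldots,x_{n-k},\underbrace{y_1,\ldots,y_1}_{k})$ is a symmetric $(n-k)$-additive function whose trace is precisely the $k$-th summand above. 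The purpose of this rewriting is that the induction hypothesis is now applicable to every $B_k^{\ast}$.

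The next step is to apply the remaining $m-1$ difference operators $\Delta_{y_2},\ldots,\Delta_{y_m}$ to $\Delta_{y_1}A^{\ast}$ term by term (these operators commute) and to compare degrees. If $m>n$, then $m-1>n-1\ge n-k$ for every $k\ge1$, so the induction hypothesis forces every summand to vanish, i.e.\ $\Delta_{y_1,\ldots,y_m}A^{\ast}=0$. If $m=n$, then $m-1=n-1>n-k$ for all $k\ge2$, so only the $k=1$ summand survives; for it the induction hypothesis gives $\Delta_{y_2,\ldots,y_n}B_1^{\ast}=(n-1)!\,B_1(y_2,\ldots,y_n)=(n-1)!\,A(y_1,\ldots,y_n)$ after reordering arguments by symmetry, and multiplying by $\binom{n}{1}=n$ produces $n!\,A(y_1,\ldots,y_n)$, as desired.

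I do not expect a genuine obstacle: the whole argument is careful bookkeeping. The two points that do require attention are the multilinear expansion of $A^{\ast}(x+y_1)$ and the degree count guaranteeing that, once the remaining $m-1$ operators have acted, every summand except the one carrying exactly one copy of $y_1$ has degree strictly below the number of operators applied to it and is therefore killed. An alternative that avoids the induction on $n$ is to first prove, by induction on $m$, the closed formula $\Delta_{y_1,\ldots,y_m}f(x)=\sum_{T\subseteq\{1,\ldots,m\}}(-1)^{m-|T|}f\bigl(x+\sum_{i\in T}y_i\bigr)$, substitute $f=A^{\ast}$, expand each $A^{\ast}\bigl(x+\sum_{i\in T}y_i\bigr)$ by $n$-additivity into a sum over maps $\{1,\ldots,n\}\to\{0\}\cup T$, and swap the order of summation; an inclusion--exclusion cancellation leaves exactly the maps whose image covers $\{1,\ldots,m\}$, and these do not exist when $m>n$, while for $m=n$ they are precisely the $n!$ bijections of $\{1,\ldots,n\}$, each contributing $A(y_1,\ldots,y_n)$ by symmetry.
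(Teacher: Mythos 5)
The paper does not prove this statement at all: Theorem~\ref{Thm_polarization} is recalled as a known preliminary from Sz\'ekelyhidi's monograph \cite{Sze91}, so there is no in-text proof to compare against. Your argument is correct and is essentially the standard proof: expand $A^{\ast}(x+y_{1})$ by $n$-additivity and symmetry into $\sum_{k=0}^{n}\binom{n}{k}B_{k}^{\ast}(x)$, then let the remaining $m-1$ difference operators act term by term and count degrees. The only touch-up needed is in how you state the induction hypothesis: since the summands $B_{k}^{\ast}$ for $k\geq 2$ are traces of symmetric $(n-k)$-additive functions of degree strictly less than $n-1$ (down to the constant term $k=n$), you must induct on $n$ with the hypothesis assumed for \emph{all} degrees below $n$ (strong induction, the constant case being the trivial floor), not merely for degree $n-1$ as written; with that rewording your degree count gives vanishing of every term when $m>n$, and for $m=n$ only the $k=1$ term survives, contributing $\binom{n}{1}(n-1)!\,A(y_{1},\ldots,y_{n})=n!\,A(y_{1},\ldots,y_{n})$ by symmetry, exactly as you computed. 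Your alternative route via the closed formula $\Delta_{y_{1},\ldots,y_{m}}f(x)=\sum_{T\subseteq\{1,\ldots,m\}}(-1)^{m-|T|}f\bigl(x+\sum_{i\in T}y_{i}\bigr)$ and an inclusion--exclusion cancellation over maps $\{1,\ldots,n\}\to\{0\}\cup T$ is also sound, and has the mild advantage of proving both cases in one stroke without any induction on $n$; either version is a complete and acceptable proof of the cited result.
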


\begin{cor}
 Suppose that $G$ is a commutative semigroup, $S$ is a commutative group, $n\in \mathbb{N}$.
 If $A\colon G^{n}\to S$ is a symmetric, $n$-additive function, then for all $x, y\in G$
 \[
  \Delta^{n}_{y}A^{\ast}(x)=n!A^{\ast}(y).
\]
\end{cor}

\begin{lem}
\label{mainfact}
  Let $n\in \mathbb{N}$ and suppose that the multiplication by $n!$ is surjective in the commutative semigroup $G$ or injective in the commutative group $S$. Then for any symmetric, $n$-additive function $A\colon G^{n}\to S$, $A^{\ast}\equiv 0$ implies that
  $A$ is identically zero, as well.
\end{lem}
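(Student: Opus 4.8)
The plan is to apply the Polarization formula (Theorem~\ref{Thm_polarization}) with $m=n$. For any $x, y_{1}, \ldots, y_{n}\in G$ it gives
\[
\Delta_{y_{1}, \ldots, y_{n}}A^{\ast}(x)=n!A(y_{1}, \ldots, y_{n}).
\]
By the definition of the iterated difference operator, the left-hand side is a finite signed sum of values of $A^{\ast}$ at points of $G$; hence if $A^{\ast}\equiv 0$ it vanishes identically, and we obtain
\[
n!A(y_{1}, \ldots, y_{n})=0 \qquad \left(y_{1}, \ldots, y_{n}\in G\right).
\]
So the whole matter reduces to cancelling the factor $n!$, and the two hypotheses handle the two ways this can be done.

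If multiplication by $n!$ is injective in $S$, then $n!s=0$ forces $s=0$; applying this with $s=A(y_{1}, \ldots, y_{n})$ for each choice of the arguments yields $A\equiv 0$ immediately.

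If instead multiplication by $n!$ is surjective in $G$, I would invoke the multilinearity identity recorded right after the definition of $n$-additivity, namely $A(x_{1}, \ldots, kx_{i}, \ldots, x_{n})=kA(x_{1}, \ldots, x_{n})$ for $k\in\mathbb{N}$, which holds in a commutative semigroup. Taking $k=n!$ in the first slot gives $A(n!y_{1}, y_{2}, \ldots, y_{n})=n!A(y_{1}, \ldots, y_{n})=0$ for all $y_{1}, \ldots, y_{n}\in G$. Now for an arbitrary $z_{1}\in G$, surjectivity provides some $y_{1}$ with $n!y_{1}=z_{1}$, whence $A(z_{1}, y_{2}, \ldots, y_{n})=0$ for all $z_{1}, y_{2}, \ldots, y_{n}\in G$, i.e. $A\equiv 0$.

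I do not expect a genuine obstacle here. The only points deserving a line of care are that the difference operator is well defined on $A^{\ast}\colon G\to S$ because $S$ is a commutative group (as assumed throughout this subsection), and that in the surjective case it suffices to absorb the factor $n!$ into a single variable rather than into all $n$ of them.
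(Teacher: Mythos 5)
Your proof is correct: applying the Polarization formula with $m=n$ to get $n!A(y_{1},\ldots,y_{n})=0$, then cancelling $n!$ either by injectivity in $S$ or by absorbing it into one variable via $A(n!y_{1},y_{2},\ldots,y_{n})=n!A(y_{1},\ldots,y_{n})$ and surjectivity in $G$, is exactly the standard argument. The paper itself states this lemma without proof (it is recalled from the monograph \cite{Sze91}, immediately after Theorem~\ref{Thm_polarization}, precisely as such a consequence), so there is nothing to compare beyond noting that your route is the intended one.
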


\begin{dfn}
 Let $G$ and $S$ be commutative semigroups, a function $p\colon G\to S$ is called a \emph{generalized polynomial} from $G$ to $S$ if it has a representation as the sum of diagonalizations of symmetric multi-additive functions from $G$ to $S$. In other words, a function $p\colon G\to S$ is a 
 generalized polynomial if and only if, it has a representation 
 \[
  p= \sum_{k=0}^{n}A^{\ast}_{k}, 
 \]
where $n$ is a nonnegative integer and $A_{k}\colon G^{k}\to S$ is a symmetric, $k$-additive function for each 
$k=0, 1, \ldots, n$. In this case, we also say that $p$ is a generalized polynomial \emph{of degree at most $n$}. 

Let $n$ be a nonnegative integer, functions $p_{n}\colon G\to S$ of the form 
\[
 p_{n}= A_{n}^{\ast}, 
\]
where $A_{n}\colon G^{n}\to S$ is symmetric and {n}-additive are the so-called \emph{generalized monomials of degree $n$}. 

\end{dfn}

\begin{rem}
 Generalized monomials 
of degree $0$ are constant functions and generalized monomials of degree $1$ are additive functions. 
Furthermore, generalized monomials of degree $2$ will be termed \emph{quadratic functions}. 
\end{rem}

\subsection*{Polynomial functions}

In the previous subsection, we already introduced a concept for polynomials, namely the notion of generalized polynomials. It is important to emphasize that this is not the only way to define polynomials on groups. 
In Laczkovich \cite{Lac19}, the interested reader can find several concepts. Further, we can read about whether there is a connection between these different concepts and, if so, what kind of connection it is.
As we will see below, the notion of \emph{(normal) polynomials} will also be necessary. The definitions and results recalled here can be found in \cite{Sze91}. 

Throughout this subsection $G$ is assumed to be a commutative group (written additively).

\begin{dfn}
{\it Polynomials} are elements of the algebra generated by additive
functions over $G$. Namely, if $n$ is a positive integer,
$P\colon\mathbb{C}^{n}\to \mathbb{C}$ is a (classical) complex polynomial in
 $n$ variables and $a_{k}\colon G\to \mathbb{C}\; (k=1, \ldots, n)$ are additive functions, then the function
 \[
  x\longmapsto P(a_{1}(x), \ldots, a_{n}(x))
 \]
is a polynomial, also conversely, every polynomial can be
represented in such a form.
\end{dfn}

\begin{rem}
 For easier distinction, in some places, polynomials will be called normal polynomials. 
\end{rem}

\begin{rem}
 We recall that the elements of $\mathbb{N}^{n}$ for any positive integer $n$ are called
 ($n$-dimensional) \emph{multi-indices}.
 Addition, multiplication, and inequalities between multi-indices of the same dimension are defined component-wise.
 Further, we define $x^{\alpha}$ for any $n$-dimensional multi-index $\alpha$ and for any
 $x=(x_{1}, \ldots, x_{n})$ in $\mathbb{C}^{n}$ by
 \[
  x^{\alpha}=\prod_{i=1}^{n}x_{i}^{\alpha_{i}}
 \]
where we always adopt the convention $0^{0}=1$. We also use the
notation $\left|\alpha\right|= \alpha_{1}+\cdots+\alpha_{n}$. With
these notations, any polynomial of degree at most $N$ on the
commutative semigroup $G$ has the form
\[
 p(x)= \sum_{\left|\alpha\right|\leq N}c_{\alpha}a(x)^{\alpha}
 \qquad
 \left(x\in G\right),
\]
where $c_{\alpha}\in \mathbb{C}$ and $a=(a_1, \dots, a_n) \colon
G\to \mathbb{C}^{n}$ is an additive function. Furthermore, the
\emph{homogeneous term of degree $k$} of $p$ is
\[
 \sum_{\left|\alpha\right|=k}c_{\alpha}a(x)^{\alpha} .
\]
\end{rem}

It is easy to see that 
each polynomial, that is, any function of the form 
\[
  x\longmapsto P(a_{1}(x), \ldots, a_{n}(x)), 
 \]
where $n$ is a positive integer,
$P\colon\mathbb{C}^{n}\to \mathbb{C}$ is a (classical) complex
polynomial in
 $n$ variables and $a_{k}\colon G\to \mathbb{C}\; (k=1, \ldots, n)$ are additive functions, is a generalized polynomial. The converse however is in general not true. A complex-valued generalized polynomial $p$ defined on a commutative group $G$ is a polynomial \emph{if and only if} its variety (the linear space spanned by its translates) is of \emph{finite} dimension. 
To clarify the situation, here we also recall Theorem 13.4 from Sz\'ekelyhidi \cite{Sze14}. 
 
 \begin{thm}\label{thm_torsion}
  The torsion-free rank of a commutative group is finite \emph{if and only if} every generalized polynomial on the group is a polynomial. 
 \end{thm}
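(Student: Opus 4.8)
The plan is to prove both implications through the standard correspondence between additive maps on $G$ and linear functionals on the rational vector space $V := G \otimes_{\mathbb{Z}} \mathbb{Q}$, whose $\mathbb{Q}$-dimension equals the torsion-free rank of $G$. Since $\mathbb{C}$ is a $\mathbb{Q}$-vector space, every additive function $a \colon G \to \mathbb{C}$ factors uniquely through the canonical map $\iota\colon G \to V$ as a $\mathbb{Q}$-linear functional on $V$; likewise every symmetric $n$-additive $A \colon G^{n} \to \mathbb{C}$ induces a symmetric $\mathbb{Q}$-multilinear form on $V^{n}$, and conversely restriction along $\iota$ turns such forms back into multi-additive functions on $G$. (Here $\iota$ need not be injective when $G$ has torsion, but only the induced functionals, not an embedding, will be used.)

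Suppose first that the torsion-free rank $r$ of $G$ is finite. Fix a $\mathbb{Q}$-basis $v_{1}, \ldots, v_{r}$ of $V$ and let $a_{1}, \ldots, a_{r} \colon G \to \mathbb{C}$ be the corresponding coordinate functionals composed with $\iota$. Given a symmetric $n$-additive $A\colon G^{n}\to \mathbb{C}$, expanding its associated multilinear form on $V^{n}$ in these coordinates and pulling back along $\iota$ yields $A(x_{1}, \ldots, x_{n}) = \sum c_{i_{1}\cdots i_{n}} a_{i_{1}}(x_{1})\cdots a_{i_{n}}(x_{n})$ with suitable constants $c_{i_{1}\cdots i_{n}}\in\mathbb{C}$; hence $A^{\ast}(x) = \sum c_{i_{1}\cdots i_{n}} a_{i_{1}}(x)\cdots a_{i_{n}}(x)$ is a classical complex polynomial evaluated at $a_{1}(x), \ldots, a_{r}(x)$, that is, a normal polynomial. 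As the normal polynomials form an algebra, every generalized polynomial $p = \sum_{k=0}^{n} A_{k}^{\ast}$ is then a normal polynomial, which settles this half.

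For the converse I would argue contrapositively: assuming the torsion-free rank of $G$ is infinite, I construct a generalized polynomial of degree $2$ that is not normal. Choose a countably infinite $\mathbb{Z}$-independent set $\{g_{j} : j \in \mathbb{N}\} \subseteq G$, complete $\{\iota(g_{j})\}$ to a $\mathbb{Q}$-basis of $V$, and let $a_{j} \colon G \to \mathbb{Q} \subseteq \mathbb{C}$ be the associated coordinate functionals, so that $a_{i}(g_{j}) = \delta_{ij}$ and, for each fixed $x \in G$, $a_{j}(x) = 0$ for all but finitely many $j$. Then $A(x,y) := \sum_{j=1}^{\infty} a_{j}(x)a_{j}(y)$ is a well-defined symmetric bi-additive function (each value being a finite sum), so $A^{\ast}$ is a quadratic function, hence a generalized polynomial. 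From $a_{i}(g_{j}) = \delta_{ij}$ one computes
\[
A^{\ast}(x + g_{j}) = A^{\ast}(x) + 2a_{j}(x) + 1 \qquad (x \in G,\ j \in \mathbb{N}),
\]
so the differences $A^{\ast}(\,\cdot\, + g_{j}) - A^{\ast}(\,\cdot\, + g_{k}) = 2(a_{j} - a_{k})$ all lie in the variety of $A^{\ast}$. Evaluating a finite linear relation among the $a_{j}$ at the points $g_{k}$ shows the functions $a_{j}$ are linearly independent, so the variety of $A^{\ast}$ is infinite-dimensional; by the criterion recalled just before the statement, $A^{\ast}$ is therefore not a normal polynomial.

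The first two paragraphs are essentially bookkeeping with extension and restriction along $\iota$. The step demanding real care is the construction in the last paragraph: one must check that the defining series for $A$ is locally finite — which is precisely the fact that each element of $G \otimes \mathbb{Q}$ is a finite $\mathbb{Q}$-combination of basis vectors — and that the translates of $A^{\ast}$ genuinely span an infinite-dimensional space, which ultimately rests on the linear independence of the coordinate functionals $a_{j}$ and is exactly where infinitude of the rank enters.
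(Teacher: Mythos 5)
The paper does not actually prove this statement: it is recalled verbatim as Theorem 13.4 of Sz\'ekelyhidi \cite{Sze14}, so there is no internal proof to compare yours against; judged on its own, your argument is correct. The forward direction is the standard one: since $\mathbb{C}$ is a $\mathbb{Q}$-vector space, symmetric $n$-additive maps $G^{n}\to\mathbb{C}$ correspond to symmetric $\mathbb{Q}$-multilinear forms on $V^{n}$ with $V=G\otimes_{\mathbb{Z}}\mathbb{Q}$, and when $\dim_{\mathbb{Q}}V=r<\infty$ the coordinate expansion exhibits each trace $A^{\ast}$ as a classical polynomial in the $r$ additive functions $a_{1},\dots,a_{r}$, so sums of traces are normal polynomials. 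For the converse, your quadratic counterexample $A(x,y)=\sum_{j}a_{j}(x)a_{j}(y)$ is well defined by local finiteness of the coordinate functionals, the translate identity $A^{\ast}(x+g_{j})=A^{\ast}(x)+2a_{j}(x)+1$ is computed correctly, and linear independence of the $a_{j}$ (evaluate at the $g_{k}$) indeed makes the variety of $A^{\ast}$ infinite-dimensional. Two points are worth making explicit to keep the proof self-contained: first, you only need the easy half of the variety criterion quoted in the paper, namely that a normal polynomial $P(a_{1},\dots,a_{n})$ has finite-dimensional variety, which follows at once because its translates lie in the span of the monomials $a^{\alpha}$ with $\left|\alpha\right|\leq \deg P$; saying this avoids any appearance of circularity with the deeper converse of that criterion. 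Second, the $\mathbb{Z}$-independent elements $g_{j}$ automatically have infinite order, so their images really are $\mathbb{Q}$-independent in $V$ and can be completed to a basis, and the extension of an $n$-additive map to $V^{n}$ should be done one variable at a time, with uniqueness of each extension guaranteeing that additivity in the remaining variables and symmetry are preserved. With these remarks your proof is complete and is, in substance, the usual proof of the cited theorem.
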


The notion of derivations can be extended in several ways. We will employ the concept of higher order derivations according to Reich \cite{Rei98} and Unger--Reich \cite{UngRei98}. For further results on characterization theorems on higher order derivations consult e.g. \cite{Eba15, Eba17, EbaRieSah} and 
\cite{GseKisVin18}. 

\begin{dfn}
 Let $\mathbb{F}\subset \mathbb{K}$ be fields with characteristic zero. The identically zero map is the only \emph{derivation of order zero}. For each $n\in \mathbb{N}$, an additive mapping 
 $f\colon \mathbb{F}\to \mathbb{K}$ is termed to be a \emph{derivation of order $n$}, if there exists $B\colon \mathbb{F}\times \mathbb{F}\to \mathbb{K}$ such that 
 $B$ is a bi-derivation of order $n-1$ (that is, $B$ is a derivation of order $n-1$ in each variable) and 
 \[
  f(xy)-xf(y)-f(x)y=B(x, y) 
  \qquad 
  \left(x, y\in \mathbb{F}\right). 
 \]
 The set of $\mathbb{K}$-valued derivations of order $n$ of the field $\mathbb{F}$ will be denoted by $\mathscr{D}_{n}(\mathbb{F}, \mathbb{K})$. 
\end{dfn}

\begin{rem}
\label{pathologic}
Since $\mathscr{D}_{0}(\mathbb{F}, \mathbb{K})=\{0\}$, the only bi-derivation of order zero is the identically zero function, thus $f\in \mathscr{D}_{1}(\mathbb{F}, \mathbb{K})$ if and only if
  \[
   f(xy)=xf(y)+f(x)y 
   \qquad 
   \left(x, y\in \mathbb{F}\right), 
  \]
that is, the notions of first-order derivations and derivations coincide. On the other hand for any $n\in \mathbb{N}$ the set $\mathscr{D}_{n}(\mathbb{F}, \mathbb{K})\setminus \mathscr{D}_{n-1}(\mathbb{F}, \mathbb{K})$ is nonempty because  $d_{1}\circ \cdots \circ d_{n}\in \mathscr{D}_{n}(\mathbb{F}, \mathbb{K})$, but $d_{1}\circ \cdots \circ d_{n}\notin \mathscr{D}_{n-1}(\mathbb{F}, \mathbb{K})$, where $d_{1}, \ldots, d_{n}\in \mathscr{D}_{1}(\mathbb{F}, \mathbb{K})$ are non-identically zero derivations.
\end{rem}

\begin{dfn}
 Let $\mathbb{F}\subset \mathbb{K}$ be fields of characteristic zero. We say that the map 
 $D \colon \mathbb{F}\to \mathbb{K}$ is a \emph{differential operator of order at most $n$} if $D$ is the linear combination, with coefficients from $\mathbb{F}$, of finitely many maps of the form 
 $d_1 \circ \cdots \circ d_k$, where $d_1, \ldots, d_k$ are $\mathbb{K}$-valued derivations on $\mathbb{F}$ and $k\leq n$. If $k = 0$ then we interpret $d_1\circ \cdots \circ d_k$ as the identity function. 
 We denote by $\mathscr{O}_n(\mathbb{F}, \mathbb{K})$ the set of differential operators of order at most $n$ defined on $\mathbb{F}$. We say that the order of a differential operator $D$ is $n$ if $D \in \mathscr{O}_{n}(\mathbb{F}, \mathbb{K})\setminus\mathscr{O}_{n-1}(\mathbb{F}, \mathbb{K})$ (where $\mathscr{O}_{-1}(\mathbb{F}, \mathbb{K})= \emptyset$, by definition).
\end{dfn}


The main result of \cite{KisLac18} is Theorem 1.1 which reads in our settings as follows. 

\begin{thm}\label{theorem_KisLac}
 Let $\mathbb{F}\subset \mathbb{K}$ be fields of characteristic zero and let $n$ be a positive integer. Then, for every function $D \colon \mathbb{F}\to \mathbb{C}$, the
following are equivalent.
\begin{enumerate}[(i)]
\item $D\in \mathscr{D}_{n}(\mathbb{F}, \mathbb{K})$
\item $D\in \mathrm{cl}\left(\mathscr{O}_{n}(\mathbb{F}, \mathbb{K})\right)$
\item $D$ is additive on $\mathbb{F}$, $D(1) = 0$, and $D/\mathrm{id}$, as a map from the group $\mathbb{F}^{\times}$ to $\mathbb{K}$, is a generalized polynomial of degree at most $n$. Here $\mathrm{id}$ stands for the identity map defined on $\mathbb{F}$. 
\end{enumerate}
\end{thm}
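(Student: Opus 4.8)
The plan is to treat $(i)\Leftrightarrow(iii)$ as the core of the statement, prove it by induction on $n$, and then deduce $(ii)$. Everything is done on the \emph{multiplicative} group $\mathbb{F}^{\times}$, using the dictionary furnished by the Polarization formula (Theorem~\ref{Thm_polarization}) together with its classical converse: a map $\varphi\colon\mathbb{F}^{\times}\to\mathbb{K}$ is a generalized polynomial of degree at most $m$ exactly when $\Delta_{y_{1},\dots,y_{m+1}}\varphi\equiv 0$, the difference operator being formed with the multiplication of $\mathbb{F}^{\times}$; for $m=1$ this is just additivity on $\mathbb{F}^{\times}$, and $\Delta_{y}$ always lowers the degree by at least one. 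The object driving the whole induction is, for additive $D\colon\mathbb{F}\to\mathbb{K}$,
\[
 B(x,y):=D(xy)-xD(y)-D(x)y \qquad (x,y\in\mathbb{F}),
\]
which is automatically symmetric and bi-additive on $(\mathbb{F},+)$, and which, with $\varphi:=D/\mathrm{id}$, satisfies $B(x,y)/(xy)=\Delta_{y}\varphi(x)-\varphi(y)$ on $\mathbb{F}^{\times}\times\mathbb{F}^{\times}$.

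For $(iii)\Rightarrow(i)$ I would induct on $n$. When $n=1$, additivity of $D$, $D(1)=0$ and additivity of $\varphi$ force $B\equiv 0$, i.e. $D$ is a derivation. In the step, $D(1)=0$ gives $B(1,\cdot)\equiv 0$, and since $\deg\varphi\le n$ the identity above shows that for each fixed $y$ the section $B(\cdot,y)$ is additive, vanishes at $1$, and has $B(\cdot,y)/\mathrm{id}$ of degree at most $n-1$; the induction hypothesis puts $B(\cdot,y)$, and symmetrically $B(x,\cdot)$, into $\mathscr{D}_{n-1}$, so $B$ is a bi-derivation of order $n-1$ and $D\in\mathscr{D}_{n}$. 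For $(i)\Rightarrow(iii)$ one reads the same identity the other way: $D\in\mathscr{D}_{n}$ gives additivity of $D$ and, since the witnessing bi-derivation has $B(1,1)=0$ (derivations of order $<n$ kill $1$, by a parallel induction), $D(1)=0$; applying the induction hypothesis to the sections of $B$ makes $x\mapsto B(x,y)/(xy)$ a generalized polynomial of degree at most $n-1$ for each fixed $y$, and substituting this into $\Delta_{y}\varphi=\varphi(y)+B(\cdot,y)/\mathrm{id}$ and applying $n$ more differences annihilates the $(n+1)$-st difference of $\varphi$ by the vanishing clause of Theorem~\ref{Thm_polarization}; hence $\varphi=D/\mathrm{id}$ is a generalized polynomial of degree at most $n$.

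For $(ii)$ I would first observe that, after setting aside the trivial summand $\mathbb{F}\cdot\mathrm{id}$, every element of $\mathscr{O}_{n}$ is a derivation of order at most $n$: a composition $d_{1}\circ\cdots\circ d_{k}$ of derivations lies in $\mathscr{D}_{k}$, because the Leibniz rule peels a bi-derivation of order $k-1$ off its defect $(d_{1}\circ\cdots\circ d_{k})(xy)-x(d_{1}\circ\cdots\circ d_{k})(y)-(d_{1}\circ\cdots\circ d_{k})(x)y$, and $\mathbb{F}$-linear combinations stay in $\mathscr{D}_{\le n}$. Combined with $(i)\Rightarrow(iii)$, this puts $\mathscr{O}_{n}$ inside the set of maps satisfying $(iii)$, which is closed under pointwise limits (all three of its defining conditions are pointwise), so $\mathrm{cl}(\mathscr{O}_{n})$ is contained in that set. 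For the reverse inclusion I would show that each $D\in\mathscr{D}_{n}$ is a pointwise limit of elements of $\mathscr{O}_{n}$: given a finite $E\subseteq\mathbb{F}$, pick a finitely generated subfield $L\supseteq E$ of $\mathbb{F}$ and prove that $D|_{L}$ is an honest differential operator of order at most $n$ on $L$ --- on a finitely generated field $\mathrm{Der}(L,\mathbb{K})$ is finite-dimensional over $\mathbb{K}$ (the transcendence degree of $L$ over $\mathbb{Q}$ being finite), so the bi-derivation $B$ attached to $D|_{L}$, reconstructed through the induction hypothesis from a fixed finite basis of derivations of $L$, can be matched by the defect of a suitable finite $\mathbb{F}$-combination of compositions of those derivations, and $D|_{L}$ differs from that combination by an ordinary derivation; extending the finitely many derivations involved back to $\mathbb{F}$ via Theorem~\ref{T14.2.1} produces $D_{E}\in\mathscr{O}_{n}(\mathbb{F},\mathbb{K})$ with $D_{E}|_{E}=D|_{E}$.

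The main obstacle is exactly this finitely-generated-field statement. Its heart is a finite-dimensional linear-algebra computation: having fixed a basis $\partial_{1},\dots,\partial_{d}$ of $\mathrm{Der}(L,\mathbb{K})$, one must show that the symmetric bi-derivations of order $n-1$ arising as defects of order-$n$ compositions $\partial_{i_{1}}\circ\cdots\circ\partial_{i_{n}}$ span everything that can occur as the defect of an order-$n$ derivation; this forces one to identify the top-bidegree (``leading'') part of each such defect --- equivalently, to track the degree-$n$ homogeneous part of $D/\mathrm{id}$ --- and then to correct by lower-order operators, the bookkeeping being awkward because the defects are not homogeneous. Two secondary technical points also need care: the composition $d_{1}\circ\cdots\circ d_{k}$ only typechecks if the inner derivations are taken on a field containing the relevant images (so one works throughout with derivations of $\mathbb{K}$ and restricts), and the extensions supplied by Theorem~\ref{T14.2.1} must be chosen compatibly. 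By comparison, once the generalized-polynomial/iterated-difference dictionary of Theorem~\ref{Thm_polarization} is in place, the degree bookkeeping inside $(i)\Leftrightarrow(iii)$ is routine.
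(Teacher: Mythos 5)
You should first be aware that the paper does not prove this statement at all: Theorem~\ref{theorem_KisLac} is quoted verbatim as Theorem~1.1 of \cite{KisLac18}, so there is no internal proof to compare with and your argument stands or falls on its own. Your treatment of $(i)\Leftrightarrow(iii)$ is sound in outline and close to the known route: the induction through the defect $B(x,y)=D(xy)-xD(y)-D(x)y$ and the identity $B(x,y)/(xy)=\Delta_{y}\varphi(x)-\varphi(y)$ with $\varphi=D/\mathrm{id}$ does work. Two points need repair or explicit import, though: a generalized polynomial of degree at most $1$ is affine (constant plus additive), not additive, so in the base case you must use $D(1)=0$, i.e.\ $\varphi(1)=0$, to kill the constant term rather than assert ``degree $\le 1$ means additive''; and the passage from $\Delta_{y_{1},\ldots,y_{n+1}}\varphi\equiv 0$ back to ``$\varphi$ is a generalized polynomial of degree at most $n$'' is the converse Fr\'echet--Djokovi\'c-type theorem, which is valid here because $\mathbb{K}$ is a $\mathbb{Q}$-vector space but is stated nowhere in this paper, so it must be cited as an external ingredient.

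The genuine gaps are in part $(ii)$. First, you cannot simply ``set aside the trivial summand $\mathbb{F}\cdot\mathrm{id}$'': with the definition used here (the $k=0$ composition is the identity), $c\cdot\mathrm{id}\in\mathscr{O}_{n}(\mathbb{F},\mathbb{K})$ is additive with constant quotient $c$, yet $(c\cdot\mathrm{id})(1)=c\neq 0$, so it violates $(iii)$ and is not a derivation of any order; hence your claimed inclusion $\mathscr{O}_{n}\subseteq\{(iii)\}$, and with it $\mathrm{cl}(\mathscr{O}_{n})\subseteq\{(iii)\}$, is false as written. This mismatch (which in the source is resolved by how the order-zero term and the condition $D(1)=0$ are coupled to the closure statement) has to be confronted, not waved away. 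Second, and decisively, the inclusion $\mathscr{D}_{n}\subseteq\mathrm{cl}(\mathscr{O}_{n})$ --- the real content of $(i)\Rightarrow(ii)$ --- rests on your claim that on a finitely generated subfield $L$ every order-$n$ derivation is an honest differential operator of order at most $n$, and you yourself flag that the spanning/leading-term lemma this requires (that defects of compositions $\partial_{i_{1}}\circ\cdots\circ\partial_{i_{n}}$ of a basis of $\mathrm{Der}(L,\mathbb{K})$ exhaust, after lower-order corrections, all defects of order-$n$ derivations) is not proved. That lemma is precisely the heart of the Kiss--Laczkovich theorem; the finite-dimensionality of $\mathrm{Der}(L,\mathbb{K})$ and the extension step via Theorem~\ref{T14.2.1} only set the stage. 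As it stands, the proposal is a reasonable plan for $(i)\Leftrightarrow(iii)$ and an unfinished plan for $(ii)$, not a proof of the theorem.
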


\subsection*{Moment sequences}

A \emph{composition} of a nonnegative integer $n$ is a sequence of nonnegative integers $\alpha= \left(\alpha_{k}\right)_{k\in \mathbb{N}}$ such that 
\[
 n= \sum_{k=1}^{\infty}\alpha_{k}. 
\]
For a positive integer $r$, an \emph{$r$-composition} of a nonnegative integer $n$ is a composition 
$\alpha= \left(\alpha_{k}\right)_{k\in \mathbb{N}}$ with  $\alpha_{k}=0$ for $k>r$. 

Given a sequence of variables $x=(x_{k})_{k\in \mathbb{N}}$ and compositions $\alpha= \left(\alpha_{k}\right)_{k\in \mathbb{N}}$ and 
$\beta= \left(\beta_{k}\right)_{k\in \mathbb{N}}$ we define 
\[
 \alpha!=\prod_{k=1}^{\infty}\alpha_{k},\quad \left| \alpha\right| = \sum_{k=1}^{\infty}\alpha_{k}, \quad 
 x^{\alpha}=\prod_{k=1}^{\infty}x_{k}^{\alpha_{k}},\quad \binom{\alpha}{\beta}= \prod_{k=1}^{\infty}\binom{\alpha_{k}}{\beta_{k}}.
\]
Furthermore, 
$\beta \leq \alpha$ means that $\beta_{k}\leq \alpha_{k}$ for all $k\in \mathbb{N}$ and 
$\beta < \alpha$ stands for $\beta \leq \alpha$  and $\beta \neq \alpha$.

\begin{dfn}\label{dfn_moment}
Let $G$ be an Abelian group, $r$ a positive integer, and for each multi-index $\alpha$ in $\mathbb{N}^r$ 
let $f_{\alpha}\colon G\to \mathbb{C}$ be continuous function. We say that $(f_{\alpha})_{\alpha \in \mathbb{N}^{r}}$ is a \emph{generalized  moment sequence of rank $r$}, if 
\begin{equation}\label{Eq3}
f_{\alpha}(x+y)=\sum_{\beta\leq \alpha} \binom{\alpha}{\beta} f_{\beta}(x)f_{\alpha-\beta}(y)
\end{equation}
holds whenever $x,y$ are in $G$. The function $f_0$, where $0$ is the zero element in $\mathbb{N}^r$, is called the {\it generating function} of the sequence.
\end{dfn}

\begin{rem}
\begin{enumerate}[(a)]
\item For $r=1$, instead of multi-indices, we have `ordinary' indices and \eqref{Eq3} is nothing but
 \[
  f_{\alpha}(x+y)= \sum_{\beta=0}^{\alpha}\binom{\alpha}{\beta}f_{\beta}(x)f_{\alpha-\beta}(y) 
  \qquad 
 \]
for each $x, y$ in $G$ and nonnegative integer $\alpha$, yielding that generalized moment functions of rank $1$ are moment sequences. 
\item For $\alpha=(0, \ldots,0)$ we have
$$
f_{0, \ldots,0}(x+y)=f_{0, \ldots,0}(x)\cdot f_{0, \ldots,0}(y) 
\qquad 
$$
for each $x, y$ in $G$ hence $f_{0, \ldots,0}=m$ is an exponential, or identically zero. 
An easy computation shows that if $f_{0, \ldots,0}$ is identically zero, then for any multi-index $\alpha\in \mathbb{N}^{r}$ we have $f_{\alpha}\equiv 0$. Thus we always assume that the generating function is not identically zero, hence it is always an exponential. 
\item The sequence of functions $(f_{\alpha})_{\alpha \in \mathbb{N}^{r}}$ is a generalized moment sequence of rank $r$ associated with the nonzero exponential $f_{0, \ldots, 0}=m$ \emph{if and only if} 
$({f_{\alpha}}/{m})_{\alpha \in \mathbb{N}^{r}}$ is a generalized moment sequence of rank $r$ associated with the exponential which is identically one. 
\item Generalized moment sequences of rank $r$ are  exponential monomials. 
\end{enumerate}
\end{rem}

As a supplement to part (d), we recall Theorem 2 from \cite{FecGseSze21}.  In the statement below, $B_{\alpha}$ stands for the Bell polynomial corresponding to the multi-index $\alpha\in \mathbb{N}^{r}$. 

\begin{thm}\label{thm_moment}
Let $G$ be a commutative group, $r$ a positive integer, and for each $\alpha$ in $\mathbb{N}^{r}$, 
 let $f_{\alpha}\colon G\to \mathbb{K}$ be a function. If the sequence of functions 
 $(f_{\alpha})_{\alpha\in \mathbb{N}^{r}}$ forms a generalized moment sequence of rank $r$, then there exists an   exponential $m\colon G\to \mathbb{K}$ and a sequence of $\mathbb{K}$-valued additive functions $a= (a_{\alpha})_{\alpha\in \mathbb{N}^{r}}$ 
 such that  for every multi-index $\alpha$ in $\mathbb{N}^{r}$ and $x$ in $G$ we have   
 \[
  f_{\alpha}(x)=B_{\alpha}(a(x))m(x). 
 \]
And also conversely, if $r$ is a positive integer, $m\colon G\to \C$ is an exponential and 
 $a= \left(a_{\alpha}\right)_{\alpha\in \mathbb{N}^{r}}$ is a sequence of complex-valued additive functions and for all $\alpha\in \mathbb{N}^{r}$, we define the function  by the above formula, then $(f_{\alpha})_{\alpha\in \mathbb{N}^{r}}$ forms a generalized moment sequence of rank $r$ associated with the exponential $m$. 
\end{thm}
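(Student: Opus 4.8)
The plan is to prove both implications by reducing everything to a single algebraic identity for the (multivariate, complete) Bell polynomials, the \emph{addition formula}
\[
B_{\alpha}(u+v)=\sum_{\beta\leq\alpha}\binom{\alpha}{\beta}B_{\beta}(u)\,B_{\alpha-\beta}(v)
\qquad\left(\alpha\in\mathbb{N}^{r}\right),
\]
valid for any two sequences of indeterminates $u=(u_{\beta})_{\beta\in\mathbb{N}^{r}}$ and $v=(v_{\beta})_{\beta\in\mathbb{N}^{r}}$. This is read off directly from the defining generating function $\sum_{\alpha\in\mathbb{N}^{r}}B_{\alpha}(u)\,\frac{t^{\alpha}}{\alpha!}=\exp\!\left(\sum_{0\neq\beta}u_{\beta}\frac{t^{\beta}}{\beta!}\right)$ together with $\exp(A+B)=\exp(A)\exp(B)$ in the ring of formal power series in the commuting variables $t=(t_{1},\dots,t_{r})$, upon comparing coefficients of $t^{\alpha}$ and using $\frac{1}{\beta!\,(\alpha-\beta)!}=\frac{1}{\alpha!}\binom{\alpha}{\beta}$. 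From the same generating function I would also record two structural facts: $B_{0}\equiv 1$, and for $\left|\alpha\right|\geq 1$ the variable $u_{\alpha}$ enters $B_{\alpha}$ only through the single monomial $u_{\alpha}$, with coefficient $1$, so that
\[
B_{\alpha}(u)=u_{\alpha}+P_{\alpha}\!\left((u_{\beta})_{0\neq\beta<\alpha}\right)
\]
for a polynomial $P_{\alpha}$ with nonnegative integer coefficients and no constant or linear term; in particular $P_{\alpha}\equiv 0$ when $\left|\alpha\right|=1$, and $B_{\alpha}$ depends only on the slots indexed by $\beta\leq\alpha$.

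The ``if'' part is then a one-line substitution. Given an exponential $m$ and $\mathbb{K}$-valued additive functions $a=(a_{\alpha})_{\alpha\in\mathbb{N}^{r}}$, put $f_{\alpha}(x)=B_{\alpha}(a(x))\,m(x)$. Then $f_{0}=B_{0}(a)\,m=m$, and for $x,y\in G$ the additivity of each $a_{\beta}$ and the multiplicativity of $m$ give
\[
f_{\alpha}(x+y)=B_{\alpha}\!\left(a(x)+a(y)\right)m(x)m(y)=\sum_{\beta\leq\alpha}\binom{\alpha}{\beta}B_{\beta}(a(x))B_{\alpha-\beta}(a(y))\,m(x)m(y)=\sum_{\beta\leq\alpha}\binom{\alpha}{\beta}f_{\beta}(x)f_{\alpha-\beta}(y),
\]
which is \eqref{Eq3}; hence $(f_{\alpha})$ is a generalized moment sequence of rank $r$ associated with $m$.

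For the ``only if'' part, let $(f_{\alpha})_{\alpha\in\mathbb{N}^{r}}$ be a generalized moment sequence of rank $r$. By part (b) of the Remark following Definition~\ref{dfn_moment} its generating function $f_{0}=m$ is a nonzero exponential, hence $m(x)\neq 0$ for every $x\in G$ (since $m(x)m(-x)=m(0)=1$), and by part (c) the normalized sequence $g_{\alpha}:=f_{\alpha}/m$ is again a generalized moment sequence, now with $g_{0}\equiv 1$. It suffices to produce $\mathbb{K}$-valued additive functions $a=(a_{\alpha})_{\alpha\in\mathbb{N}^{r}}$ with $g_{\alpha}=B_{\alpha}(a)$, since then $f_{\alpha}=B_{\alpha}(a)\,m$. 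I would construct the $a_{\alpha}$ by induction on $\left|\alpha\right|$ (the value $a_{0}$ is immaterial, say $a_{0}=0$). If $\left|\alpha\right|=1$, then \eqref{Eq3} with $g_{0}\equiv 1$ reads $g_{\alpha}(x+y)=g_{\alpha}(x)+g_{\alpha}(y)$, so $a_{\alpha}:=g_{\alpha}$ is additive and $B_{\alpha}(a)=a_{\alpha}=g_{\alpha}$. If $\left|\alpha\right|\geq 2$, assume $a_{\beta}$ has been defined, is additive, and satisfies $g_{\beta}=B_{\beta}(a)$ for all $\beta$ with $1\leq\left|\beta\right|<\left|\alpha\right|$ (hence for all $\beta<\alpha$, since $P_{\alpha}$ involves only such indices, so the recursion is well-founded). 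Define $a_{\alpha}:=g_{\alpha}-P_{\alpha}\!\left((a_{\beta})_{0\neq\beta<\alpha}\right)$, so that $g_{\alpha}=B_{\alpha}(a)$ holds by construction and only additivity of $a_{\alpha}$ remains. To obtain it, expand $g_{\alpha}(x+y)$ by \eqref{Eq3}, isolate the terms $\beta=0$ and $\beta=\alpha$ (contributing $g_{\alpha}(y)+g_{\alpha}(x)$), replace $g_{\beta},g_{\alpha-\beta}$ in the remaining terms $0<\beta<\alpha$ by $B_{\beta}(a),B_{\alpha-\beta}(a)$ via the inductive hypothesis, and identify the surviving sum, by the addition formula (applied with $u=a(x)$, $v=a(y)$, which is legitimate since $B_\alpha$ reads only the slots $\beta\le\alpha$), as $B_{\alpha}(a(x)+a(y))-B_{\alpha}(a(x))-B_{\alpha}(a(y))$; since $B_{\alpha}(a(x))=g_{\alpha}(x)$ and $B_{\alpha}(a(y))=g_{\alpha}(y)$ by construction, everything cancels except $g_{\alpha}(x+y)=B_{\alpha}(a(x)+a(y))$. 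As $g_{\alpha}(x+y)=B_{\alpha}(a(x+y))$ and $a_{\beta}(x)+a_{\beta}(y)=a_{\beta}(x+y)$ for all $\beta<\alpha$, the equality $B_{\alpha}(a(x+y))=B_{\alpha}(a(x)+a(y))$ differs on its two sides only in the $u_{\alpha}$-slot, namely $a_{\alpha}(x+y)$ versus $a_{\alpha}(x)+a_{\alpha}(y)$, which therefore coincide. This closes the induction and proves the theorem.

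The routine work is all generating-function bookkeeping; the step I expect to require the most care is the inductive one in the ``only if'' direction — in particular confirming that $P_{\alpha}$ genuinely depends only on multi-indices strictly below $\alpha$ (so the recursion defining $a_{\alpha}$ makes sense), and keeping the multi-index forms of \eqref{Eq3} and of the addition formula aligned so that the polynomial corrections cancel cleanly. A smaller but necessary point is the reduction at the start of that direction: one must observe that a nonzero exponential on a group vanishes nowhere, which is what makes the passage from $(f_{\alpha})$ to $g_{\alpha}=f_{\alpha}/m$ legitimate.
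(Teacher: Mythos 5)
Your proof is correct: the addition formula for the multivariate Bell polynomials (read off from the exponential generating function), the observation that a nonzero exponential vanishes nowhere so one may normalize to $g_{\alpha}=f_{\alpha}/m$ with $g_{0}\equiv 1$, and the induction on $\left|\alpha\right|$ using $B_{\alpha}(u)=u_{\alpha}+P_{\alpha}\left((u_{\beta})_{0\neq\beta<\alpha}\right)$ together yield both directions without gaps. Note, however, that the paper itself offers no proof of this statement -- it is quoted verbatim from \cite{FecGseSze21} -- so there is no internal argument to compare with; your reconstruction is essentially the standard argument of that reference, and it stands on its own.
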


\section{Results}

In this section let $\mathbb{F}$ and $\mathbb{K}$ be fields with zero characteristic such that $\mathbb{F}\subset \mathbb{K}$. 
The following lemma is immediate. 

\begin{lem}\label{lem_variety}
Let us endow $\mathbb{F}^{\times}$ and also $\mathbb{K}$ with the discrete topology and let $\mathscr{C}(\mathbb{F}^{\times}, \mathbb{K})$ denote the linear space of all those functions $f\colon \mathbb{F}^{\times}\to \mathbb{K}$ that are continuous. Then the set 
\[
\mathscr{V}= \left\{f\vert_{\mathbb{F}^{\times}}\, \vert \, f\colon \mathbb{F}\to \mathbb{K} \; \text{is quadratic} \right\}
\]
is a closed, translation-invariant linear subspace of $\mathscr{C}(\mathbb{F}^{\times}, \mathbb{K})$. So $\mathscr{V}\subset \mathscr{C}(\mathbb{F}^{\times}, \mathbb{K})$ is a \emph{variety}. Note the the translate of a function $f\in \mathscr{C}(\mathbb{F}^{\times}, \mathbb{K})$ by an element $y\in \mathbb{F}^{\times}$ is defined as 
\[
(\tau_{y} f) = f(x\cdot y) 
\qquad 
\left(x\in \mathbb{F}^{\times}\right). 
\]
\end{lem}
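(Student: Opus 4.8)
The plan is to verify directly the three defining properties of a variety: that $\mathscr{V}$ is a linear subspace, that it is invariant under the (multiplicative) translations $\tau_{y}$, and that it is closed. The first two are purely algebraic; the content — such as it is — lies in closedness. The underlying structural remark used throughout is that, since $\mathbb{F}^{\times}$ and $\mathbb{K}$ are discrete, $\mathscr{C}(\mathbb{F}^{\times}, \mathbb{K})$ is just the product $\mathbb{K}^{\mathbb{F}^{\times}}$ with the topology of pointwise convergence; hence every evaluation $f\mapsto f(x)$ is continuous, and any subset of $\mathscr{C}(\mathbb{F}^{\times}, \mathbb{K})$ defined by a family — of arbitrary cardinality — of equations, each one an affine-linear relation among finitely many of the values $f(x)$, is closed. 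My strategy for closedness is to present $\mathscr{V}$ as exactly such an intersection.

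For the algebraic properties I would first recall that $q\colon\mathbb{F}\to\mathbb{K}$ is quadratic precisely when $q=A^{\ast}$ for some symmetric bi-additive $A\colon\mathbb{F}\times\mathbb{F}\to\mathbb{K}$, and that then $q(0)=0$. Linearity is then immediate: if $q_{1}=A_{1}^{\ast}$, $q_{2}=A_{2}^{\ast}$ and $c\in\mathbb{K}$, then $A_{1}+A_{2}$ and $cA_{1}$ are again symmetric and bi-additive, so $q_{1}+q_{2}=(A_{1}+A_{2})^{\ast}$ and $cq_{1}=(cA_{1})^{\ast}$ are quadratic, and restriction to $\mathbb{F}^{\times}$ is linear. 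For translation invariance, given $y\in\mathbb{F}^{\times}$ and $q=A^{\ast}$, I would put $A_{y}(u,v):=A(uy,vy)$; distributivity in $\mathbb{F}$ together with the bi-additivity and symmetry of $A$ shows at once that $A_{y}$ is symmetric and bi-additive, whence $A_{y}^{\ast}$ is quadratic and $A_{y}^{\ast}(x)=A(xy,xy)=q(xy)=(\tau_{y}q)(x)$; consequently $\tau_{y}\bigl(q\vert_{\mathbb{F}^{\times}}\bigr)=A_{y}^{\ast}\vert_{\mathbb{F}^{\times}}\in\mathscr{V}$.

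For closedness, the plan is to associate to each $g\in\mathscr{C}(\mathbb{F}^{\times},\mathbb{K})$ its extension $\widehat{g}\colon\mathbb{F}\to\mathbb{K}$ determined by $\widehat{g}(0)=0$ and $\widehat{g}\vert_{\mathbb{F}^{\times}}=g$; since quadratic functions vanish at $0$, one has $g\in\mathscr{V}$ if and only if $\widehat{g}$ is quadratic. At this point I would invoke the classical characterisation of quadratic functions — available since $\mathbb{F}$ and $\mathbb{K}$, having characteristic zero, are uniquely $2$-divisible — that $p\colon\mathbb{F}\to\mathbb{K}$ is quadratic exactly when it satisfies the parallelogram law $p(x+y)+p(x-y)=2p(x)+2p(y)$ for all $x,y\in\mathbb{F}$ (see, e.g., \cite{Kuc}). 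Writing out "$\widehat{g}$ satisfies the parallelogram law" as a system of conditions on $g$ alone amounts to a short case analysis according to which of $x,y,x+y,x-y$ are zero: the identity is automatic for $y=0$ (and for $x=y=0$), it reads $g(-y)=g(y)$ for $x=0\neq y$, it reads $g(2x)=4g(x)$ for $0\neq x=y$, it reads $g(2x)=2g(x)+2g(-x)$ for $0\neq x=-y$, and for $x,y,x+y,x-y$ all nonzero it is the genuine relation $g(x+y)+g(x-y)=2g(x)+2g(y)$. Each such condition is an affine-linear relation among finitely many values of $g$, hence closed, and $\mathscr{V}$ is their intersection; therefore $\mathscr{V}$ is closed. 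One could equally argue by limits: if $g$ lies in $\overline{\mathscr{V}}$, then for every finite $F\subseteq\mathbb{F}^{\times}$ there is a quadratic $q_{F}$ with $q_{F}\vert_{F}=g\vert_{F}$, and taking, for given $x,y\in\mathbb{F}$, any $F\supseteq\{x+y,x-y,x,y\}\cap\mathbb{F}^{\times}$ and using $q_{F}(0)=0=\widehat{g}(0)$, the parallelogram identity at $(x,y)$ passes from $q_{F}$ to $\widehat{g}$, so $\widehat{g}$ is quadratic and $g\in\mathscr{V}$.

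I do not expect a genuine obstacle here — this is the "immediate" lemma announced in the text. The only points calling for care are: making explicit that "closed" refers to the pointwise-convergence topology (so that the finitely-supported linear relations above are preserved under limits); the modest bookkeeping around the point $0$, which does not belong to $\mathbb{F}^{\times}$ and is the reason one passes to the extension $\widehat{g}$; and the single external ingredient, the parallelogram-law description of quadratic functions, which is entirely standard.
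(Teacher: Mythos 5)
Your proposal is correct. The paper offers no argument for this lemma at all---it is simply announced as ``immediate''---so there is no authorial proof to compare against; your verification supplies exactly the routine details being omitted: linearity and (multiplicative) translation invariance via the symmetric bi-additive representation $q=A^{\ast}$ and $A_{y}(u,v)=A(uy,vy)$, and closedness by extending $g$ by zero at the origin and expressing the parallelogram-law characterization of quadratic functions as a family of finitely-supported linear relations among the values of $g$, each closed in the topology of pointwise convergence (which is the compact-open topology here, since $\mathbb{F}^{\times}$ and $\mathbb{K}$ are discrete). The only points that genuinely needed care---making the topology explicit, the bookkeeping at $0\notin\mathbb{F}^{\times}$, and the use of unique $2$-divisibility of $\mathbb{K}$ in the parallelogram-law characterization---are all handled correctly in your write-up.
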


\begin{ex}\label{Ex1}
Let $\varphi_{1}, \varphi_{2}\colon \mathbb{F}\to \mathbb{K}$ be homomorphisms and let us consider the function $q\colon \mathbb{F}\to \mathbb{K}$ defined by 
\[
q(x)= \varphi_{1}(x)\varphi_{2}(x) 
\qquad 
\left(x\in \mathbb{F}\right). 
\]
Then $q$ is the trace of the symmetric and bi-additive mapping 
\[
B(x, y)= \varphi_{1}(x)\varphi_{2}(y)+\varphi_{1}(y)\varphi_{2}(x) 
\qquad 
\left(x, y\in \mathbb{F}\right). 
\]
So $q$ is a quadratic function. 
Further, we have 
\begin{multline*}
q(xy)= \varphi_{1}(xy)\varphi_{2}(xy)
=
(\varphi_{1}(x)\varphi_{1}(y))\cdot (\varphi_{2}(x)\varphi_{2}(y))
\\
=
(\varphi_{1}(x)\varphi_{2}(x)) \cdot (\varphi_{1}(y)\varphi_{2}(y))
=
q(x)\cdot q(y) 
\qquad 
\left(x, y\in \mathbb{F}\right). 
\end{multline*}
\end{ex}

\begin{ex}\label{Ex2}
Let $d\colon \mathbb{F}\to \mathbb{K}$ be a derivation and define the quadratic mapping $q\colon \mathbb{F}\to \mathbb{K}$ by 
\[
q(x)= d(x^{2}) 
\qquad 
\left(x\in \mathbb{F}\right). 
\]
An easy computation shows that in this case, $q$ determines uniquely a symmetric and bi-additive mapping, namely 
\[
B(x, y)= d(xy) 
\qquad 
\left(x, y\in \mathbb{F}\right). 
\]
Moreover, we have 
\begin{align*}
    \frac{q(xy)}{x^{2}y^{2}}&= \frac{1}{x^{2}y^{2}}d((xy)^{2})
=\frac{1}{x^{2}y^{2}} \cdot 2xy d(xy)
=\frac{2}{xy}\left[xd(y)+d(x)y\right]
\\
&= 2 \frac{d(x)}{x}+2\frac{d(y)}{y}
= \frac{d(x^{2})}{x^{2}}+\frac{d(y^{2})}{y^{2}}  \\
& = \frac{q(x)}{x^{2}}+\frac{q(y)}{y^{2}}
\end{align*}
for all $x, y\in \mathbb{F}^{\times}$. 
\end{ex}

\begin{ex}
Analogously, if $n\geq 2$ is a positive integer then the mapping $f\colon \mathbb{F}\to \mathbb{K}$ defined by 
\[
f(x)= d(x^{n}) 
\qquad 
\left(x\in \mathbb{F}\right)
\]
is a (generalized) monomial of degree $n$. The symmetric, $n$-additive mapping $B_{n}\colon \mathbb{F}^{n}\to \mathbb{K}$ determined by $f$ is 
\[
B_{n}(x_{1}, \ldots, x_{n})= d(x_{1} \cdots x_{n}) 
\qquad 
\left(x_{1}, \ldots, x_{n}\in \mathbb{F}\right). 
\]

Further 
\begin{align*}
\frac{f(xy)}{(xy)^{n}}&= \frac{1}{(xy)^{n}}d((xy)^{n})= \frac{1}{(xy)^{n}} \cdot n(xy)^{n-1}d(xy) \\
&= \frac{n}{xy} \left[d(x)y+xd(y)\right] = n \dfrac{d(x)}{x}+n\frac{d(y)}{y} 
= \frac{d(x^{n})}{x^{n}}+ \frac{d(y^{n})}{y^{n}} \\
&= \frac{f(x)}{x^{n}}+ \frac{f(y)}{y^{n}}
\end{align*}
for all $x, y\in \mathbb{F}^{\times}$. 
\end{ex}

For any positive integer $n\geq 2$, define the mapping $\pi_{n}\colon \mathbb{F}\to \mathbb{K}$ by 
\[
\pi_{n}(x)=x^{n} 
\qquad 
\left(x\in \mathbb{F}\right). 
\]
Observe that in this case, the above examples show that if we consider the (generalized) monomial of degree $n$ 
\[
f(x)= d(x^{n}) 
\qquad 
\left(x\in \mathbb{F}\right), 
\]
then the mapping $\dfrac{f}{\pi_{n}}\colon \mathbb{F}^{\times} \to  \mathbb{K}$ will be additive on the Abelian group $(\mathbb{F}^{\times}, \cdot)$, that is, we have 
\[
\dfrac{f}{\pi_{n}}(xy)= \dfrac{f}{\pi_{n}}(x)+\dfrac{f}{\pi_{n}}(y)
\]
for all $x, y\in \mathbb{F}^{\times}$.

With the notations and assumptions of Lemma \ref{lem_variety}, Example \ref{Ex1} shows that the mapping $q$ defined with the aid of the homomorphisms $\varphi_{1}$ and $\varphi_{2}$ by the formula 
\[
q(x)= \varphi_{1}(x)\varphi_{2}(x) 
\qquad 
\left(x\in \mathbb{F}\right), 
\]
is an exponential in the variety $\mathscr{V}$. 

Similarly, Example \ref{Ex2} shows that the mapping $q$ defined with the help of the derivation $d\colon \mathbb{F}\to \mathbb{K}$ by 
\[
q(x)= d(x^{2}) 
\qquad 
\left(x\in \mathbb{F}\right)
\]
is a moment function of degree $1$ corresponding to the exponential $q_{0}(x)=x^{2}\; (x\in \mathbb{F})$. 

In what follows, we study the converse direction.

\subsection*{Quadratic functions that are multiplicative on the multiplicative group $\mathbb{F}^{\times}$}

\begin{thm}\label{thm_mult}
Let $\mathbb{K}$ be a field of characteristic zero and $\mathbb{F}\subset \mathbb{K}$ be a subfield. Let further $q\colon \mathbb{F}\to \mathbb{K}$ be a quadratic function. 
If the mapping $q$ is multiplicative on $\mathbb{F}^{\times}$, that is, 
\[
q(xy)= q(x)q(y)
\]
holds for all $x, y\in \mathbb{F}^{\times}$, then there exist homomorphisms $\varphi_{1}, \varphi_{2}\colon \mathbb{F}\to \mathbb{K}$ such that 
\[
q(x)= c\cdot \varphi_{1}(x)\varphi_{2}(x)
\qquad 
\left(x\in \mathbb{F}\right), 
\]
where $c\in \left\{ 0, 1\right\}$. 
\end{thm}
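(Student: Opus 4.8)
The plan is to squeeze a linear identity for the symmetric bi-additive form underlying $q$ out of the multiplicativity, to extract from it an auxiliary additive function $a$, and then, up to a ``square root'', to split $q$ into two homomorphisms. First I would dispose of the trivial case: from $q(1)=q(1\cdot1)=q(1)^{2}$ we get $q(1)\in\{0,1\}$, and if $q(1)=0$ then $q(x)=q(x)q(1)=0$ on $\mathbb{F}^{\times}$, so $q\equiv0$ (a quadratic function vanishes at $0$) and $c=0$ works. Hence assume $q(1)=1$; then $q(x)q(x^{-1})=q(1)=1$ shows $q$ is nowhere zero on $\mathbb{F}^{\times}$, so $q\colon\mathbb{F}^{\times}\to\mathbb{K}^{\times}$ is a group homomorphism and the target is $c=1$.

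Let $B\colon\mathbb{F}\times\mathbb{F}\to\mathbb{K}$ be the symmetric bi-additive map with $B^{\ast}=q$. Substituting $x\mapsto x+z$ into $q(xy)=q(x)q(y)$ (legitimate for all $x,z\in\mathbb{F}$ and $y\in\mathbb{F}^{\times}$, the case $y=0$ being trivial) and expanding both sides by bi-additivity yields the key identity
\[
B(xy,zy)=q(y)\,B(x,z)\qquad(x,z\in\mathbb{F},\ y\in\mathbb{F}^{\times}).
\]
As a by-product, $\{x:B(x,\cdot)\equiv0\}$ is an additive subgroup stable under multiplication by $\mathbb{F}$, hence an ideal of the field $\mathbb{F}$, hence $\{0\}$, so $B$ is non-degenerate. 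Put $a:=B(\,\cdot\,,1)$, an additive map with $a(1)=1$. Specialising the key identity gives $B(x,y)=q(y)\,a(x/y)$ for $y\neq0$; combining this with the additivity of $B$ in the second variable (and the additivity of $a$) yields $a(xy)=2a(x)a(y)-B(x,y)$ for all $x,y$, and in particular $q(x)=B(x,x)=2a(x)^{2}-a(x^{2})$. Setting $w:=a^{2}-q$ (a quadratic function with bi-additive form $B_{w}(x,y)=a(xy)-a(x)a(y)$) and computing $a\big((xy)^{2}\big)$ in two ways (once through $a(z^{2})=2a(z)^{2}-q(z)$, once through the key identity) produces the central relation
\[
B_{w}(x,y)^{2}=\big(a(xy)-a(x)a(y)\big)^{2}=\big(a(x)^{2}-q(x)\big)\big(a(y)^{2}-q(y)\big)=w(x)w(y).
\]

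Now I would split into two cases. If $w\equiv0$, then $a(xy)^{2}=q(xy)=q(x)q(y)=a(x)^{2}a(y)^{2}$; replacing $y$ by $y+t$ in this identity, using additivity of $a$, and then setting $t=1$ gives $a(x)\big(a(xy)-a(x)a(y)\big)=0$, and $a(x)\neq0$ on $\mathbb{F}^{\times}$ because $a(x)^{2}=q(x)\neq0$; hence $a$ is multiplicative, so it is a homomorphism and $q=a\cdot a$. If $w\not\equiv0$, pick $y_{0}\in\mathbb{F}^{\times}$ with $w(y_{0})\neq0$. Squaring $B_{w}(x,y)^{2}=w(x)w(y)$ gives $\big(B_{w}(x,y_{0})B_{w}(y,y_{0})\big)^{2}=\big(w(y_{0})B_{w}(x,y)\big)^{2}$, and a short additivity argument (an additive function of $y$ that at each point equals $0$ or $-2w(y_{0})B_{w}(x,y)$ must vanish, the characteristic being zero) upgrades this to the rank-one identity $w(y_{0})B_{w}(x,y)=B_{w}(x,y_{0})B_{w}(y,y_{0})$. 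Fixing $\delta_{0}$ with $\delta_{0}^{2}=w(y_{0})$ and setting $\delta(x):=B_{w}(x,y_{0})/\delta_{0}$, one obtains an additive map with $\delta^{2}=w$, $\delta(1)=0$, and $B_{w}(x,y)=\delta(x)\delta(y)$, so that $a(xy)=a(x)a(y)+\delta(x)\delta(y)$; comparing squares once more and fixing the sign at $y=1$ yields the Leibniz relation $\delta(xy)=a(x)\delta(y)+a(y)\delta(x)$. Then $\varphi_{1}:=a+\delta$ and $\varphi_{2}:=a-\delta$ are additive and multiplicative, i.e.\ homomorphisms, and $\varphi_{1}\varphi_{2}=a^{2}-\delta^{2}=a^{2}-w=q$.

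The step I expect to be the real obstacle is, in the case $w\not\equiv0$, ensuring that $\varphi_{1},\varphi_{2}$ take values in $\mathbb{K}$ and not merely in a quadratic extension: since $B_{w}(x,y_{0})\in\mathbb{K}$, this comes down to $\delta_{0}=\sqrt{w(y_{0})}\in\mathbb{K}$, that is, to $w$ taking square values in $\mathbb{K}$. This is immediate when $\mathbb{K}$ is quadratically closed (in particular for $\mathbb{K}=\mathbb{C}$), and in general it is precisely the point where one must exploit that $q$ is a \emph{bona fide} $\mathbb{K}$-valued quadratic function (so that $B$, hence $B_{w}$, is genuinely $\mathbb{K}$-valued). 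Together with the sign bookkeeping in the Leibniz relation — easy, but not purely mechanical — this square-value property is the part requiring real care; everything else is routine identity-chasing.
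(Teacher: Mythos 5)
Your route is genuinely different from the paper's. The paper polarizes $B(x^{2},x^{2})=B(x,x)^{2}$ into a vanishing $4$-additive form, extracts the tri-additive identity for $a(x)=B(x,1)$, observes that $A_{z}(x)=a(xz)-a(x)a(z)$ satisfies the sine-type Levi-Civita equation $A_{z}(xy)=a(x)A_{z}(y)+A_{z}(x)a(y)$ on $\mathbb{F}^{\times}$, and then invokes the structure theory of normal exponential polynomials to write $a=\tfrac12(\varphi_{1}+\varphi_{2})$. You instead work with $w=a^{2}-q$ and its bi-additive form $B_{w}(x,y)=a(xy)-a(x)a(y)$, prove $B_{w}(x,y)^{2}=w(x)w(y)$, and build the factorization by hand. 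Up to the point you flag, your computations are correct: the key identity $B(xy,zy)=q(y)B(x,z)$, the formula $B(x,y)=2a(x)a(y)-a(xy)$, the central relation via the two evaluations of $a((xy)^{2})$, the case $w\equiv0$, the upgrade to $w(y_{0})B_{w}(x,y)=B_{w}(x,y_{0})B_{w}(y,y_{0})$, and the sign-fixing for $\delta(xy)=a(x)\delta(y)+a(y)\delta(x)$ all go through, each using only that a product of two additive functions vanishing identically forces one factor to vanish (characteristic zero). This has the merit of avoiding the Levi-Civita machinery entirely.

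However, the obstacle you name is not a deferrable technicality: after your reductions, the existence of $\delta_{0}\in\mathbb{K}$ with $\delta_{0}^{2}=w(y_{0})$ is equivalent to the conclusion, and it can genuinely fail. Take $\mathbb{F}=\mathbb{K}=\mathbb{Q}(\theta)$ with $\theta=2^{1/4}$, and let $\tau,\bar{\tau}\colon\mathbb{F}\to\mathbb{C}$ be the two non-real embeddings ($\tau(\theta)=i\theta$). Writing $x=a_{0}+a_{1}\theta+a_{2}\theta^{2}+a_{3}\theta^{3}$, the function $q(x)=\tau(x)\bar{\tau}(x)=(a_{0}-a_{2}\theta^{2})^{2}+\theta^{2}(a_{1}-a_{3}\theta^{2})^{2}$ is $\mathbb{K}$-valued, is the trace of an evident $\mathbb{K}$-valued symmetric bi-additive form, and is multiplicative on $\mathbb{F}^{\times}$; but $q\not\equiv0$, the only nonzero homomorphisms $\mathbb{F}\to\mathbb{K}$ are the identity and $\theta\mapsto-\theta$, and none of their three products agrees with $q$ (at $x=1+\theta$ one gets $q=1+\sqrt2$ versus $1\pm2\theta+\sqrt2$ and $1-\sqrt2$). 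Here $w(\theta)=-\sqrt2$ is not a square in $\mathbb{K}$, which is exactly your obstruction. So your argument proves the statement when every element of $\mathbb{K}$ is a square there (e.g. $\mathbb{K}$ algebraically closed, in particular $\mathbb{K}=\mathbb{C}$), or with the weakened conclusion that $\varphi_{1},\varphi_{2}$ are conjugate homomorphisms into the quadratic extension $\mathbb{K}(\delta_{0})$; for arbitrary $\mathbb{K}\supset\mathbb{F}$ of characteristic zero the gap cannot be closed, since the conclusion itself fails in the example above. Note that the paper's proof meets the same issue at the corresponding step, where the decomposition $a=\alpha_{1}m_{1}+\alpha_{2}m_{2}$ is asserted with $\mathbb{K}$-valued exponentials $m_{1},m_{2}$; the exponential-polynomial theory being used is naturally a complex-valued (or algebraically closed) one, so your honest flagging of the square-value property identifies the real crux rather than a routine loose end.
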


\begin{proof}
Let $\mathbb{K}$ be a field of characteristic zero and $\mathbb{F}\subset \mathbb{K}$ be a subfield. 
Suppose further that the quadratic mapping $q\colon \mathbb{F}\to \mathbb{K}$ is multiplicative on $\mathbb{F}^{\times}$, that is, we have 
\[
q(xy)= q(x)q(y)
\]
holds for all $x, y\in \mathbb{F}^{\times}$. Let $B\colon \mathbb{F}\times \mathbb{F}\to \mathbb{C}$ be the uniquely determined symmetric bi-additive mapping for which we have 
\[
B(x, x)= q(x) 
\qquad 
\left(x\in \mathbb{F}\right). 
\]
The assumption that the quadratic mapping $q$ is multiplicative on $\mathbb{F}^{\times}$ yields for the mapping $B$ that 
\[
B(xy, xy)= B(x, x)B(y, y) 
\qquad 
\left(x, y\in \mathbb{F}^{\times}\right), 
\]
especially, we have 
\[
B(x^{2}, x^{2})= B(x, x)^{2} 
\qquad 
\left(x\in \mathbb{F}^{\times}\right). 
\]
Define the mapping $B_{4}$ on $\mathbb{F}^{4}$ by 
\begin{multline*}
    B_{4}(x_1, x_2, x_3, x_4)=
   B\left(x_{1}\,x_{4} , x_{2}\,x_{3}\right)+B\left(x_{1}\,x_{3} , 
 x_{2}\,x_{4}\right)+B\left(x_{1}\,x_{2} , x_{3}\,x_{4}\right)
 \\-B\left(x_{1} , x_{2}\right)\,B\left(x_{3} , 
 x_{4}\right)-B\left(x_{1}
  , x_{3}\right)\,B\left(x_{2} , x_{4}\right)-B\left(x_{1} , x_{4}
 \right)\,B\left(x_{2} , x_{3}\right)\\
 \qquad 
 \left(x_{1}, x_{2}, x_{3}, x_{4}\in \mathbb{F}\right). 
\end{multline*}
Since $B$ is a symmetric and bi-additive mapping, $B_{4}$ is a symmetric and $4$-additive mapping. Further the trace of $B_{4}$ is 
\[
B_{4}(x, x, x, x)= 3 \left(B(x^{2}, x^{2})- B(x, x)^{2} \right)=0
\]
for all $x\in \mathbb{F}^{\times}$. Thus $B_{4}$ vanishes identically on $\mathbb{F}^{4}$. 
Especially, 
\[
0=B_{4}(1, 1, 1, 1)=
-3\,\left(B\left(1 , 1\right)-1\right)\,B\left(1 , 1\right). 
\]
Thus $q(1)=B(1, 1)\in \left\{ 0, 1\right\}$. 
Further, for all $x\in \mathbb{F}^{\times}$ we have 
\[
0= B_{4}(x, 1, 1, 1)=
-3\,\left(B\left(1 , 1\right)-1\right)\,B\left(x , 1\right). 
\]
Finally, 
\[
0=B_{4}(x, x, 1, 1)=
B\left(x^2 , 1\right)+(2-B\left(1 , 1\right))\,B\left(x , x\right)-2\,B\left(x , 1\right)^2
\]
for all $x\in \mathbb{F}^{\times}$. 
Firstly assume that $q(1)=B(1, 1)=0$. Then 
\[
B(x, 1)=0 
\qquad 
\left(x\in \mathbb{F}\right). 
\]
Therefore 
\[
2B(x, x)= 2B(x, 1)^{2}-B(x^{2}, 1)=0
\]
for all $x\in \mathbb{F}^{\times}$. This means that $q$ is identically zero. 

Assume now that $q(1)=B(1, 1)=1$. Consider the additive function $a\colon \mathbb{F}\to \mathbb{K}$ defined by 
\[
a(x)= B(x, 1) 
\qquad 
\left(x\in \mathbb{F}\right). 
\]
Then identity $B_{4}(x, x, 1, 1)=0$ can be re-formulated as 
\[
B(x, x)= 2a(x)^{2}-a(x^{2}) 
\qquad 
\left(x\in \mathbb{F}^{\times}\right), 
\]
from which 
\[
B(x, y)= 2a(x)a(y)-a(xy)
\qquad 
\left(x, y\in \mathbb{F}\right)
\]
and 
\[
q(x)= 2a(x)^{2}-a(x^2)
\qquad 
\left(x\in \mathbb{F}\right)
\]
follow. 
Observe that by the definition of the function $a$, 
\[
a(x)= B(x, 1)= 2a(x)a(1)-a(x)= (2a(1)-1)a(x) 
\qquad 
\left(x\in \mathbb{F}\right). 
\]
Thus $a(1)= 1$, otherwise the function $a$ and thus the $q$ function are identically zero. 
Since we have 
\[
B(x^{2}, x^{2})= B(x, x)^{2}, 
\]
the additive function $a$ has to fulfill 
\[
-a\left(x^4\right)+a\left(x^2\right)^2+4\,a\left(x\right)^2\,a
 \left(x^2\right)-4\,a\left(x\right)^4=0
\]
for all $x\in \mathbb{F}$. 
Again, the left-hand side of this equation is the trace of the symmetric and $4$-additive mapping $A_{4}$ defined on $\mathbb{F}^{4}$ by 
\begin{multline*}
A_{4}(x_1, x_2, x_3, x_4)
=
-a\left(x_{1}\,x_{2}\,x_{3}\,x_{4}\right)+\frac{1}{3}\left[ a\left(x_{1}\,x_{2}
 \right)\,a\left(x_{3}\,x_{4}\right)+a\left(x_{1}\,x_{3}\right)\,a
 \left(x_{2}\,x_{4}\right)+a\left(x_{2}\,x_{3}\right)\,a\left(x_{1}\,
 x_{4}\right)\right]
 \\
 +\frac{2}{3}\,\left[a\left(x_{1}\right)\,a\left(x_{2}
 \right)\,a\left(x_{3}\,x_{4}\right)+a\left(x_{1}\right)\,a\left(
 x_{3}\right)\,a\left(x_{2}\,x_{4}\right)+a\left(x_{2}\right)\,a
 \left(x_{3}\right)\,a\left(x_{1}\,x_{4}\right)
 \right. 
 \\
 \left. +a\left(x_{1}\right)\,
 a\left(x_{2}\,x_{3}\right)\,a\left(x_{4}\right)+a\left(x_{2}\right)
 \,a\left(x_{1}\,x_{3}\right)\,a\left(x_{4}\right)+a\left(x_{1}\,
 x_{2}\right)\,a\left(x_{3}\right)\,a\left(x_{4}\right)\right]
 \\
 -4\,a\left(x_{1}\right)\,a\left(x_{2}\right)\,a\left(x_{3}\right)
 \,a\left(x_{4}\right)
\end{multline*}
Therefore we have
\[
    -\,a\left(x\,y\,z\right)+\,a\left(x\right)\,a\left(y\,z\right)+
 \,a\left(y\right)\,a\left(x\,z\right)+\left(\,a\left(x\,y\right)
 -2  \,a\left(x\right)\,a\left(y\right)\right)\,a\left(z\right)=0
\]
for all $x, y, z\in \mathbb{F}$. For any fixed $z\in \mathbb{F}$ consider the additive function $A_{z}\colon \mathbb{F}\to \mathbb{K}$ defined by
\[
A_{z}(x)=a(xz)-a(x)a(z) 
\qquad 
\left(x\in \mathbb{F}\right). 
\]
The above equation, in terms of this function $A_{z}$ is 
\[
A_{z}(xy)=a(x)A_{z}(y)+ A_{z}(x)a(y) 
\qquad
\left(x, y\in \mathbb{F}\right). 
\]
Thus $A_{z}$ is a normal exponential polynomial of degree at most two on the multiplicative group $\mathbb{F}^{\times}$. Further, 
\begin{enumerate}[(i)]
   \item either for all $z\in \mathbb{F}$ the system $\left\{ a, A_{z}\right\}$ is linearly dependent 
    \item or there exists an element $z\in \mathbb{F}$ such that the system $\left\{ a, A_{z}\right\}$ is linearly independent. 
\end{enumerate}
If we have (i), then 
\[
A_{z}(x)= c(z)a(x)
\]
holds for all $x, z\in \mathbb{F}^{\times}$. Due to the definition of the function $A_{z}$ this means however that 
\[
A(xz)= (c(z)+a(z))a(x) 
\qquad 
\left(x, z\in \mathbb{F}\right). 
\]
Thus $a$ is a constant multiple of an exponential of the Abelian group $\mathbb{F}^{\times}$. 
Thus because of the additivity of $a$, yields that $a$ is a constant multiple of a homomorphism $\varphi\colon \mathbb{F}\to \mathbb{K}$. However, $a(1)=1$, therefore $a=\varphi$. 

If we have (ii), then exists an element $z\in \mathbb{F}$ such that the system $\left\{ a, A_{z}\right\}$ is linearly independent. In this case, however, not only $A_{z}$, but also $a$ is a normal exponential polynomial of degree at most two on the multiplicative group $\mathbb{F}^{\times}$. 
Thus 
\begin{enumerate}[(a)]
    \item either 
    \[
    a(x)= (\alpha_{1} l(x)+\alpha_{2}) m(x) 
    \qquad 
    \left(x\in \mathbb{F}\right)
    \]
    \item or 
    \[
    a(x)= \alpha_{1}m_{1}(x)+ \alpha_{2}m_{2}(x)
    \qquad 
    \left(x\in \mathbb{F}\right)
    \]
\end{enumerate}
with some constants $\alpha_{1}, \alpha_{2}\in \mathbb{K}$ and with an additive function $l\colon \mathbb{F}^{\times}\to \mathbb{K}$ and exponentials $m, m_{1}, m_{2}\colon \mathbb{F}^{\times}\to \mathbb{K}$. 
We emphasize that now we work on the multiplicative group $\mathbb{F}^{\times}$. Therefore, e.g. the fact that $l$ is an additive function means that 
\[
l(xy)= l(x)+l(y) 
\qquad 
\left(x, y\in \mathbb{F}^{\times}\right)
\]
and similarly, the fact that $m$ is exponential means that
\[
m(xy)= m(x)m(y)
\qquad 
\left(x, y\in \mathbb{F}^{\times}\right). 
\]
Using these representations in the equation for the function $q$, it turns out that in case (a) the function $l$ is identically zero, and $\alpha_{2}=1$, in case (b) $\alpha_{1}, \alpha_{2}= \frac{1}{2}$ and $m_{1}, m_{2}$ are homomorphisms, due to the additivity of $a$. 
Thus there exist homomorphisms $\varphi_{1}, \varphi_{2}\colon \mathbb{F}\to \mathbb{K}$ such that 
\[
a(x)= \dfrac{\varphi_{1}(x)+\varphi_{2}(x)}{2} 
\qquad 
\left(x\in \mathbb{F}\right). 
\]
Note, however, that in this case we have 
\begin{align*}
q(x)= 2a(x)^{2}-a(x^2) &=
2\left(\dfrac{\varphi_{1}(x)+\varphi_{2}(x)}{2} \right)^{2}-\dfrac{\varphi_{1}(x^2)+\varphi_{2}(x^2)}{2} \\
&= \frac{\varphi_{1}(x)^2+2\varphi_{1}(x)\varphi_{2}(x)+\varphi_{2}(x)^{2}}{2}-\dfrac{\varphi_{1}(x)^2+\varphi_{2}(x)^2}{2}\\
& = \varphi_{1}(x)\varphi_{2}(x)
\end{align*}
for all $x\in \mathbb{F}$. 
\end{proof}

\subsection*{Quadratic functions that are additive on the multiplicative group $\mathbb{F}^{\times}$}

\begin{thm}\label{thm_add}
Let $\mathbb{K}$ be a field of characteristic zero and $\mathbb{F}\subset \mathbb{K}$ be a subfield. Let further $q\colon \mathbb{F}\to \mathbb{K}$ be a quadratic function. 
If the mapping $\dfrac{q}{\pi_{2}}$ is additive on $\mathbb{F}^{\times}$, that is, 
\[
q(xy)= q(x)y^{2}+x^{2}q(y)
\]
holds for all $x, y\in \mathbb{F}^{\times}$, then there exists a second order derivation $d\colon \mathbb{F}\to \mathbb{K}$ such that 
\[
q(x)= 4xd(x)-d(x^{2})
\qquad 
\left(x\in \mathbb{F}\right). 
\]
\end{thm}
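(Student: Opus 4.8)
\emph{Proof plan.}
The plan is to pass from $q$ to the uniquely determined symmetric bi‑additive map $B\colon\mathbb{F}\times\mathbb{F}\to\mathbb{K}$ with $B(x,x)=q(x)$, to convert the hypothesis (which a priori only constrains the \emph{multiplicative} behaviour of $q$ on $\mathbb{F}^{\times}$) into a genuine bilinear identity on all of $\mathbb{F}$, and then to polarize it until it becomes a statement about the additive function $a:=B(\,\cdot\,,1)$ that can be read off as ``$a$ is a derivation of order two''.

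First I would prove the bilinear reformulation. For a fixed $z\in\mathbb{F}$, both $(x,y)\mapsto B(xz,yz)$ and $(x,y)\mapsto z^{2}B(x,y)+q(z)xy$ are symmetric and bi‑additive in $(x,y)$, and by the hypothesis together with $q(0)=B(0,0)=0$ they have the same trace, namely $x\mapsto q(xz)=z^{2}q(x)+x^{2}q(z)$. Since $\mathbb{K}$ has characteristic zero, multiplication by $2!$ is injective in $\mathbb{K}$, so Lemma~\ref{mainfact} (applied to the difference of the two maps) gives
\[
B(xz,yz)=z^{2}B(x,y)+q(z)\,xy\qquad(x,y,z\in\mathbb{F}).
\]
Taking $x=y=z=1$ yields $q(1)=0$. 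Next I would polarize this identity in the ``scaling'' variable: replacing $z$ by $z+w$ and subtracting the instances at $z$ and at $w$ produces
\[
B(xz,yw)+B(xw,yz)=2zw\,B(x,y)+2B(z,w)\,xy\qquad(x,y,z,w\in\mathbb{F}).
\]
Setting $a:=B(\,\cdot\,,1)$ (additive, $a(1)=q(1)=0$) and putting $y=w=1$ gives $B(x,z)=2z\,a(x)+2x\,a(z)-a(xz)$, hence in particular $q(x)=B(x,x)=4x\,a(x)-a(x^{2})$.

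It then remains to show that $a\in\mathscr{D}_{2}(\mathbb{F},\mathbb{K})$. For this I would substitute the formula $B(u,v)=2v\,a(u)+2u\,a(v)-a(uv)$ into the last displayed identity, set $w=1$, and use $a(1)=0$; after routine simplification this yields the ``third‑order Leibniz identity''
\[
a(xyz)+yz\,a(x)+xz\,a(y)+xy\,a(z)=y\,a(xz)+x\,a(yz)+z\,a(xy)\qquad(x,y,z\in\mathbb{F}).
\]
Now introduce the symmetric bi‑additive map $D(x,y):=a(xy)-x\,a(y)-y\,a(x)$. A short direct computation shows that the identity above is \emph{exactly} the statement $D(xy,z)=x\,D(y,z)+y\,D(x,z)$, i.e.\ $D(\,\cdot\,,z)$ satisfies the Leibniz rule for every $z$; being additive as well, $D(\,\cdot\,,z)$ is a derivation, and by the symmetry of $D$ the same holds in the other variable, so $D$ is a bi‑derivation of order one. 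Since $a(xy)-x\,a(y)-a(x)\,y=D(x,y)$ with $D$ a bi‑derivation of order one, the additive map $a$ is by definition a derivation of order two. Setting $d:=a$ we conclude $d\in\mathscr{D}_{2}(\mathbb{F},\mathbb{K})$ and $q(x)=4x\,d(x)-d(x^{2})$, as required. (Alternatively, the last two steps can be replaced by verifying conditions (iii) of Theorem~\ref{theorem_KisLac}, i.e.\ that $a$ is additive, $a(1)=0$, and $a/\mathrm{id}$ is a generalized polynomial of degree at most two on $\mathbb{F}^{\times}$, but the route through $D$ is more self‑contained.)

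The main obstacle is the very first step: the hypothesis is assumed only on $\mathbb{F}^{\times}$, and one must legitimately promote it to an identity of symmetric bi‑additive maps on all of $\mathbb{F}$; this is where the characteristic‑zero assumption and the uniqueness part of the polarization machinery (Lemma~\ref{mainfact}) are essential, and one must be careful that the trace identity indeed extends across $0$. Everything afterwards is polarization bookkeeping, and the pay‑off — recognizing the third‑order Leibniz identity as precisely the assertion that $D$ is a bi‑derivation of order one — is a one‑line check.
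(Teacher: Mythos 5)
Your proposal is correct, and it follows the paper's overall skeleton in the first half while diverging genuinely in the second. Up to $q(x)=4xa(x)-a(x^{2})$ the content is the same polarization bookkeeping as in the paper, only organized differently: the paper explicitly builds a symmetric $4$-additive map $B_{4}$ whose trace is $B(x^{2},x^{2})-2x^{2}B(x,x)$, concludes $B_{4}\equiv 0$, and evaluates it at $(1,1,1,1)$ and $(x,y,1,1)$; you instead fix $z$, apply Lemma~\ref{mainfact} to the two symmetric bi-additive maps $(x,y)\mapsto B(xz,yz)$ and $(x,y)\mapsto z^{2}B(x,y)+q(z)xy$ (your remark that the trace identity extends across $0$ is exactly the point that needs care, and it holds since $q(0)=0$), and then polarize in $z$ by hand — both routes give $q(1)=0$, $a:=B(\cdot,1)$ additive with $a(1)=0$, and $B(x,y)=2y\,a(x)+2x\,a(y)-a(xy)$. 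The real difference is the last step: the paper specializes to $q(x^{2})=2x^{2}q(x)$, obtains the single-variable identity $a(x^{4})-6x^{2}a(x^{2})+8x^{3}a(x)=0$, and invokes \cite[Corollary 4]{GseKisVin18} to conclude $a\in\mathscr{D}_{2}(\mathbb{F},\mathbb{K})$, whereas you retain the three-variable identity $a(xyz)+yz\,a(x)+xz\,a(y)+xy\,a(z)=y\,a(xz)+x\,a(yz)+z\,a(xy)$ and verify directly that it is equivalent to the Leibniz rule $D(xy,z)=x\,D(y,z)+y\,D(x,z)$ for the symmetric bi-additive map $D(x,y)=a(xy)-x\,a(y)-y\,a(x)$; since $D(\cdot,z)$ is also additive, $D$ is an ordinary derivation in each variable, and $a$ is a second-order derivation straight from the definition. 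I checked this equivalence and your intermediate computations; they are correct. What your route buys is self-containedness (no appeal to the external characterization theorem, and the three-variable Leibniz identity comes for free), at the cost of a slightly longer computation; the paper's citation-based ending is shorter but less transparent.
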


\begin{proof}
Let $\mathbb{F}\subset \mathbb{K}$ be fields with characteristic zero and assume that $q\colon \mathbb{F}\to \mathbb{K}$ is a quadratic mapping which is additive on $\mathbb{F}^{\times}$. In other words, suppose that the quadratic mapping $q$ also fulfills 
\[
q(xy)= q(x)y^{2}+x^{2}q(y)
\]
for all $x, y\in \mathbb{F}^{\times}$. 

Since the mapping $q$ is quadratic, there exists a uniquely determined symmetric and bi-additive mapping $B\colon \mathbb{F}\times \mathbb{F}\to \mathbb{K}$ such that 
\[
B(x, x)= q(x) 
\qquad 
\left(x\in \mathbb{F}\right). 
\]
Then for this symmetric and bi-additive mapping $B$, we have 
\[
B(xy, xy)= y^{2}B(x, x)+x^{2}B(y, y) 
\qquad 
\left(x, y\in \mathbb{F}\right). 
\]
Let us consider the mapping $B_{4}\colon \mathbb{F}^{4}\to \mathbb{K}$ defined by 
\begin{multline*}
  B_{4}(x_1, x_2, x_3, x_4)
=
\frac{1}{3}\left\{{B\left(x_{1}\,x_{4} , x_{2}\,x_{3}\right)+B\left(x_{1}\,x_{3} , 
 x_{2}\,x_{4}\right)+B\left(x_{1}\,x_{2} , x_{3}\,x_{4}\right)}\right\}
 \\
 -\frac{1}{3}\left\{x_{1}\,B\left(x_{3} , x_{2}\right)\,x_{4}+x_{2}\,B\left(x_{3}
  , x_{1}\right)\,x_{4}+B\left(x_{1} , x_{2}\right)\,x_{3}\,x_{4} 
  \right. 
  \\
  \left. 
  +
 x_{1}\,x_{2}\,B\left(x_{3} , x_{4}\right)+x_{1}\,B\left(x_{2} , 
 x_{4}\right)\,x_{3}+B\left(x_{1} , x_{4}\right)\,x_{2}\,x_{3}\right\}
 \\
 \left(x_1, x_2, x_3, x_4\in \mathbb{F}\right). 
  \end{multline*}
Since $B$ is symmetric and bi-additive, $B_{4}$ is symmetric and $4$-additive. Further, its trace is 
\[
B_{4}(x, x, x, x)= B\left(x^2 , x^2\right)-2\,x^2\,B\left(x , x\right)=0
\]
for all $x \in \mathbb{F}$. Thus $B_{4}$ is identically zero on $\mathbb{F}^{4}$. 
Especially, we have 
\[
B_{4}(1, 1, 1, 1)= -B(1, 1)=0, 
\]
so $B(1, 1)= q(1)=0$. 
Further, we have for all $x, y\in \mathbb{F}$ that 
\begin{multline*}
0= B_{4}(x, y, 1, 1)
\\
= 
B\left(x\,y , 1\right)-x\,B\left(y , 1\right)-B\left(x , 1\right)\,
 y-B\left(1 , 1\right)\,x\,y-B\left(1 , x\right)\,y+B\left(x , y
 \right)-B\left(1 , y\right)\,x
 \\
 =
B(xy, 1)-2xB(y, 1) -2yB(x, 1)+B(x, y). 
\end{multline*}
Define the additive mapping $a\colon \mathbb{F}\to \mathbb{K}$ by 
\[
a(x)= B(x, 1) 
\qquad 
\left(x\in \mathbb{F}\right)
\]
to deduce that the latter identity is 
\[
B(x, y)= 2xa(y)+2ya(y)-a(xy) 
\qquad 
\left(x, y\in \mathbb{F}\right). 
\]
Observe that this already shows that 
\[
q(x)= B(x, x)= 4xa(x)-a(x^{2}) 
\qquad 
\left(x\in \mathbb{F}\right). 
\]
Now we show that $a$ is a second-order derivation. 
Since the quadratic function $q$ fulfills 
\[
q(x^{2})= 2x^{2}q(x) 
\qquad 
\left(x\in \mathbb{F}\right), 
\]
for the additive function $a$ we have 
\[
a\left(x^4\right)-6\,x^2\,a\left(x^2\right)+8\,x^3\,a\left(x\right)=0
\]
for all $x\in \mathbb{F}$. 
Using \cite[Corollary 4]{GseKisVin18}, the additive function $a$ has to be a derivation of order two. 
Therefore, there exists a mapping $d\in \mathscr{D}_{2}(\mathbb{F}, \mathbb{K})$ such that 
\[
q(x)= B(x, x)= 4xd(x)-d(x^{2})
\]
holds for all $x\in \mathbb{F}$. 
\end{proof}

\begin{rem}
Note that the quadratic mapping that appears in Example \ref{Ex2} is covered in Theorem \ref{thm_add}.  Indeed, if $d\colon \mathbb{F}\to \mathbb{K}$ is a derivation, then $d\in \mathscr{D}_{1}(\mathbb{F}, \mathbb{K}) \subset \mathscr{D}_{2}(\mathbb{F}, \mathbb{K})$. So $d$ is a derivation of order two, too. 
Thus
\[
q(x)= 4x d(x)-d(x^{2})= 2 d(x^2)-d(x^{2})=d(x^{2}) 
\qquad 
\left(x\in \mathbb{F}\right), 
\]
showing that mappings appearing in Example \ref{Ex2} can indeed be written as mappings appearing in Theorem \ref{thm_add}. 

However, Theorem \ref{thm_add} shows that the equation 
\[
q(xy)= x^{2}q(y)+y^{2}q(x)=0 
\qquad 
\left(x, y\in \mathbb{F}\right)
\]
has other quadratic solutions than those found in Example \ref{Ex2}. 
Indeed, if $d_{1}, d_{2}\colon \mathbb{F}\to \mathbb{K}$ are non-identically zero derivations, then $d_{1}\circ d_{2}\in \mathscr{D}_{2}(\mathbb{F}, \mathbb{K})\setminus \mathscr{D}_{1}(\mathbb{F}, \mathbb{K})$. Then the mapping $q\colon \mathbb{F}\to \mathbb{K}$ defined by 
\[
q(x)= 4x d_{1}\circ d_{2}(x)-d_{1}\circ d_{2}(x^{2}) 
\qquad 
\left(x\in \mathbb{F}\right)
\]
is quadratic. 
Further, we have 
\begin{align*}
    q(x^{2})&= 4x^{2} d_{1}\circ d_{2}(x^{2})-d_{1}\circ d_{2}(x^{4}) \\
    &= 4x^{2}\left(2xd_{1}\circ d_{2}(x)+2d_{1}(x)d_{2}(x)\right)-\left(4x^{3}d_{1}\circ d_{2}(x)+12x^{2}d_{1}(x)d_{2}(x)\right)\\
    & = 4x^{3}d_{1}\circ d_{2}(x)-4x^{2}d_{1}(x)d_{2}(x)
\end{align*}
and 
\begin{align*}
    2x^{2}q(x)&= 2x^{2}\left(4x d_{1}\circ d_{2}(x)-d_{1}\circ d_{2}(x^{2}) \right)\\
    &= 8x^{3}d_{1}\circ d_{2}(x)-2x^{2}\left(2xd_{1}\circ d_{2}(x)+2d_{1}(x)d_{2}(x)\right)\\
    & = 4x^{3}d_{1}\circ d_{2}(x)-4x^{2}d_{1}(x)d_{2}(x)
\end{align*}
for all $x\in \mathbb{F}$. 
Thus 
\[
q(x^{2})=2x^{2}q(x) 
\qquad 
\left(x\in \mathbb{F}\right). 
\]
Note, however, that due to the methods described in the proof of Theorem \ref{thm_add}, this equation is equivalent to 
\[
q(xy)= x^{2}q(y)+y^{2}q(x) 
\qquad 
\left(x, y\in \mathbb{F}\right). 
\]
\end{rem}

In view of Theorem \ref{thm_mult}, if $\varphi\colon \mathbb{F}\to \mathbb{K}$ is a homomorphism then the mapping $q$ defined on $\mathbb{F}$ by 
\[
q(x)= \varphi(x)^{2} 
\qquad 
\left(x\in \mathbb{F}\right)
\]
is quadratic and is also an exponential on the multiplicative group $\mathbb{F}^{\times}$ 
Thus, using the previous theorem, we obtain the following statement.

\begin{cor}
Let $\mathbb{K}$ be a field of characteristic zero and $\mathbb{F}\subset \mathbb{K}$ be a subfield. Let further $q\colon \mathbb{F}\to \mathbb{K}$ be a quadratic function, while $\varphi\colon \mathbb{F}\to \mathbb{K}$ be a homomorphism such that 
\[
q(xy)= \varphi(x)^{2}q(y)+q(x)\varphi(y)^{2}
\qquad 
\left(x, y\in \mathbb{F}\right). 
\]
Then there exists a second-order derivation $d\in \mathscr{D}_{2}(\mathbb{F}, \mathbb{K})$ such that 
\[
q(x)= \varphi\left(4xd(x)-d(x^{2})\right) 
\qquad 
\left(x\in \mathbb{F}\right). 
\]
And vice versa, that is, if d is a second-order derivation and $\varphi$ is a homomorphism, then the function $q$ given by the above formula is a quadratic function that also satisfies the above equation.
\end{cor}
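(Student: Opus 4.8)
The plan is to reduce the statement — in both directions — to Theorem~\ref{thm_add} by transporting the data along the homomorphism $\varphi$. First I would record the dichotomy: a homomorphism of fields is either identically zero or injective. Indeed $\varphi(1)=\varphi(1)^{2}$ forces $\varphi(1)\in\{0,1\}$; if $\varphi(1)=0$ then $\varphi(x)=\varphi(x\cdot 1)=\varphi(x)\varphi(1)=0$ for all $x$, i.e.\ $\varphi\equiv 0$, while if $\varphi(1)=1$ then $\ker\varphi$ is a proper ideal of the field $\mathbb{F}$, hence $\{0\}$. If $\varphi\equiv 0$, the functional equation degenerates to $q(xy)=0$ for all $x,y\in\mathbb{F}$; taking $y=1$ gives $q\equiv 0$, and the representation holds with the zero derivation. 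So from now on assume $\varphi$ is a (unital) injective homomorphism and put $\mathbb{F}_{0}:=\varphi(\mathbb{F})$, a subfield of $\mathbb{K}$ of characteristic zero, with $\varphi\colon\mathbb{F}\to\mathbb{F}_{0}$ an isomorphism.

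The core step is to push $q$ forward to $\mathbb{F}_{0}$. I would set $Q:=q\circ\varphi^{-1}\colon\mathbb{F}_{0}\to\mathbb{K}$. If $B\colon\mathbb{F}\times\mathbb{F}\to\mathbb{K}$ is the symmetric, bi-additive map with trace $q$, then $\widetilde{B}(u,v):=B(\varphi^{-1}(u),\varphi^{-1}(v))$ is symmetric and bi-additive on $\mathbb{F}_{0}$ (because $\varphi^{-1}$ is additive) and has trace $Q$, so $Q$ is a quadratic function on $\mathbb{F}_{0}$. Writing $u=\varphi(x)$, $v=\varphi(y)$ and using $\varphi(xy)=uv$, the hypothesis $q(xy)=\varphi(x)^{2}q(y)+q(x)\varphi(y)^{2}$ turns into $Q(uv)=u^{2}Q(v)+Q(u)v^{2}$ for all $u,v\in\mathbb{F}_{0}^{\times}$, i.e.\ $Q/\pi_{2}$ is additive on $\mathbb{F}_{0}^{\times}$. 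Theorem~\ref{thm_add}, applied to the pair $\mathbb{F}_{0}\subset\mathbb{K}$ and the quadratic function $Q$, then provides $D\in\mathscr{D}_{2}(\mathbb{F}_{0},\mathbb{K})$ with $Q(u)=4uD(u)-D(u^{2})$ on $\mathbb{F}_{0}$. Substituting $u=\varphi(x)$ yields $q(x)=4\varphi(x)D(\varphi(x))-D(\varphi(x)^{2})$, which is the asserted representation once $D$ is read back through the isomorphism $\varphi$ (this is the role of the outer $\varphi$ in the formula $q(x)=\varphi(4xd(x)-d(x^{2}))$).

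For the converse I would run the argument in the opposite order: the function $R(u)=4uD(u)-D(u^{2})$ attached to a second order derivation $D$ is quadratic and, by the remark following Theorem~\ref{thm_add}, satisfies $R(uv)=u^{2}R(v)+R(u)v^{2}$; precomposing with $\varphi$ gives $q=R\circ\varphi$, which is the trace of the symmetric bi-additive map $\widetilde{B}\circ(\varphi\times\varphi)$, hence quadratic, and applying $\varphi$ to that identity together with the multiplicativity of $\varphi$ returns $q(xy)=\varphi(x)^{2}q(y)+q(x)\varphi(y)^{2}$. I expect the only delicate points to be bookkeeping: separating the degenerate case $\varphi\equiv 0$ from the injective case, and verifying carefully that ``quadratic function'' and ``second order derivation'' are preserved under transport along the field isomorphism $\varphi$ (so that the $D$ produced by Theorem~\ref{thm_add} on $\mathbb{F}_{0}$ really corresponds to a $d$ on $\mathbb{F}$). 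Everything substantive is already contained in Theorem~\ref{thm_add}.
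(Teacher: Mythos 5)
Your proof is correct and leans on the same pillar as the paper's: settle the degenerate case $\varphi\equiv 0$ first, then transport the problem along the isomorphism $\varphi$ and invoke Theorem~\ref{thm_add}. The genuine difference is the direction of the transport. The paper composes on the range side, setting $\tilde{q}=\varphi^{-1}\circ q$ on the original domain $\mathbb{F}$ and applying Theorem~\ref{thm_add} to $\mathbb{F}\subset\mathbb{K}$; you compose on the domain side, setting $Q=q\circ\varphi^{-1}$ on $\mathbb{F}_{0}=\varphi(\mathbb{F})$ and applying the theorem to the pair $\mathbb{F}_{0}\subset\mathbb{K}$. Your variant buys something real: $q\circ\varphi^{-1}$ is always well defined, whereas the paper's $\varphi^{-1}\circ q$ tacitly assumes that $q$ takes its values in $\varphi(\mathbb{F})$, which is not justified there and can fail (already for $\varphi=\mathrm{id}$ and $q(x)=4xd(x)-d(x^{2})$ with a second-order derivation $d$ whose values lie outside $\mathbb{F}$). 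The price is paid at the very end: what you actually obtain is $q(x)=4\varphi(x)D(\varphi(x))-D(\varphi(x)^{2})$ with $D\in\mathscr{D}_{2}(\varphi(\mathbb{F}),\mathbb{K})$, and turning this into the literal wording of the corollary --- some $d\in\mathscr{D}_{2}(\mathbb{F},\mathbb{K})$ with $q(x)=\varphi\left(4xd(x)-d(x^{2})\right)$ --- requires exactly the value-range bookkeeping you flag but do not carry out: $D\circ\varphi$ is only a $\varphi$-twisted second-order derivation on $\mathbb{F}$, and $\varphi^{-1}\circ D\circ\varphi$ needs $D$ to be $\varphi(\mathbb{F})$-valued. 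Since the corollary as printed is itself loose on this point (the argument of the outer $\varphi$ need not lie in $\mathbb{F}$, an issue the paper's own proof inherits), I would not count this against you; your formula is the rigorous content of the statement. Note finally that you also sketch the converse direction via the remark following Theorem~\ref{thm_add}, which the paper's proof does not spell out at all.
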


\begin{proof}
In addition to the conditions of the statement, let us first consider the case when $\varphi$ is identically zero. In this case, the above equation reduces to the identity 
\[
q(xy)=0 \qquad 
\left(x, y\in \mathbb{F}\right). 
\]
Therefore, $q$ is identically zero, from which it immediately follows that it has the desired representation. 

Assume now that $\varphi$ is not the identically zero homomorphism. Then $\varphi$ is one-to-one and $\varphi^{-1}$ is also a homomorphism. 
Define the mapping $\tilde{q}$ by 
\[
\tilde{q}(x)= \varphi^{-1}(q(x)) 
\qquad 
\left(x\in \mathbb{F}\right). 
\]
Then $\tilde{q}$ will be a quadratic mapping for which we have 
\[
\tilde{q}(xy)= x^{2}q(y)+y^{2}q(x)
\qquad 
\left(x, y\in \mathbb{F}\right). 
\]
Theorem \ref{thm_add} yields that there exists a second-order derivation $d\in \mathscr{D}(\mathbb{F}, \mathbb{K})$ such that 
\[
\tilde{q}(x)= 4xd(x)-d(x^{2}) 
\qquad 
\left(x\in \mathbb{F}\right), 
\]
from which the statement of this corollary already follows.
\end{proof}

Finally, we note that the method described in the proof of Theorem \ref{thm_add} is also suitable for determining the solutions to the equation
\[
q(xy)= \varphi_{1}(x)\varphi_{2}(x)q(y)+\varphi_{1}(y)\varphi_{2}(y)q(x) 
\qquad 
\left(x, y\in \mathbb{F}\right)
\]
for the unknown quadratic function $q\colon \mathbb{F}\to \mathbb{K}$, where $\varphi_{1}, \varphi_{2}\colon \mathbb{F}\to \mathbb{K}$ are (non-identically zero) homomorphisms. 
In contrast to the previous two statements, however, in this case, the description is unfortunately not complete. 

Notice that the above equation says that the quadratic function $q$ is a moment function of degree $1$ corresponding to the exponential $q_0= \varphi_{1}\cdot \varphi_{2}$.

\begin{prop}\label{prop_moment1}
Let $\mathbb{K}$ be a field of characteristic zero and $\mathbb{F}\subset \mathbb{K}$ be a subfield. Let further $q\colon \mathbb{F}\to \mathbb{K}$ be a quadratic function, while $\varphi_{1}, \varphi_{2}\colon \mathbb{F}\to \mathbb{K}$ be  homomorphisms such that 
\[
q(xy)= \varphi_{1}(x)\varphi_{2}(x)q(y)+\varphi_{1}(y)\varphi_{2}(y)q(x) 
\qquad 
\left(x, y\in \mathbb{F}\right). 
\]
Then there exists an additive function $a\colon \mathbb{F}\to \mathbb{K}$ such that 
\[
q(x)= 2(\varphi_{1}(x)+\varphi_{2}(x))a(x)-a(x^{2})
\qquad 
\left(x\in \mathbb{F}\right), 
\]
where the additive function also fulfills
\begin{align*}
\tag{$\spadesuit$}
2\,&a\left(x\,y\,z\right)
\\
&-\left(\varphi_{1}(x)+\varphi_{2}(x)\right)\,a\left(y \,z\right) 
-\left(\varphi_{1}(y)+\varphi_{2}(y)\right)\,a\left(x\,z\right) 
-\left(\varphi_{1}(z)+\varphi_{2}(z)\right)\,a\left(x\,y\right)\\
&+ \left(\varphi_{1}(x)\,\varphi_{2}(y)+\varphi_{2}(x)\,\varphi_{1}(y)\right)\,a\left(z\right)
+ \left(\varphi_{1}(x)\,\varphi_{2}(z)+\varphi_{2}(x)\,\varphi_{1}(z)\right)\,a\left(y\right)\\
&+ \left(\varphi_{1}(y)\,\varphi_{2}(z)+\varphi_{2}(y)\,\varphi_{1}(z)\right)\,a\left(x\right) =0
\end{align*}
for all $x, y, z\in \mathbb{F}$. 
And also conversely, if $a$ is an additive function fulfilling $(\spadesuit)$ with the homomorphisms $\varphi_{1}, \varphi_{2}$, then the function $q$ given by the above formula is a quadratic function that also satisfies the above equation.
\end{prop}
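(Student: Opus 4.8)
The plan is to mimic the proof of Theorem~\ref{thm_add}, with the products $\varphi_1\varphi_2$ playing the part that the squaring map $\pi_2$ played there. First I would deal with the degenerate cases: if $\varphi_1$ or $\varphi_2$ is the identically zero homomorphism, then $\varphi_1(x)\varphi_2(x)=0$ for every $x\in\mathbb{F}$, so the hypothesis becomes $q(xy)=0$ for all $x,y$; taking $y=1$ gives $q\equiv 0$, and the asserted representation then holds with $a\equiv 0$, which satisfies $(\spadesuit)$ trivially. So from now on I would assume both $\varphi_i$ are nonzero, hence $\varphi_i(1)=1$ and $\varphi_i$ is injective. Let $B\colon\mathbb{F}\times\mathbb{F}\to\mathbb{K}$ be the unique symmetric bi-additive map with $B(x,x)=q(x)$. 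Polarizing the hypothesis gives
\[
B(xy,xy)=\varphi_1(x)\varphi_2(x)\,B(y,y)+\varphi_1(y)\varphi_2(y)\,B(x,x)\qquad(x,y\in\mathbb{F}),
\]
and putting $y=x$ yields the auxiliary one-variable identity $q(x^2)=2\varphi_1(x)\varphi_2(x)q(x)$.

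Next I would produce a symmetric $4$-additive map that vanishes identically. Since $B$ is symmetric and bi-additive, $(x_1,x_2,x_3,x_4)\mapsto B(x_1x_2,x_3x_4)$ is $4$-additive, and since $\varphi_1,\varphi_2$ are multiplicative and additive, $(x_1,x_2,x_3,x_4)\mapsto\varphi_1(x_1)\varphi_2(x_2)B(x_3,x_4)$ is $4$-additive; symmetrizing the former over the three pair-partitions of $\{1,2,3,4\}$ and the latter over the twelve role-assignments of $(\varphi_1,\varphi_2,B(\cdot,\cdot))$ gives symmetric $4$-additive maps with traces $B(x^2,x^2)$ and $\varphi_1(x)\varphi_2(x)B(x,x)$, respectively. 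Hence there is a symmetric $4$-additive $B_4\colon\mathbb{F}^4\to\mathbb{K}$ whose trace is $q(x^2)-2\varphi_1(x)\varphi_2(x)q(x)$, which is identically zero by the auxiliary identity; since $\mathbb{K}$ has characteristic zero, Lemma~\ref{mainfact} (via the Polarization formula, Theorem~\ref{Thm_polarization}) forces $B_4\equiv 0$ on $\mathbb{F}^4$. An equivalent and perhaps cleaner route is to polarize the displayed identity separately in $x$ and in $y$, arriving at the four-variable identity
\[
B(x_1y_1,x_2y_2)+B(x_1y_2,x_2y_1)=\bigl(\varphi_1(x_1)\varphi_2(x_2)+\varphi_1(x_2)\varphi_2(x_1)\bigr)B(y_1,y_2)+\bigl(\varphi_1(y_1)\varphi_2(y_2)+\varphi_1(y_2)\varphi_2(y_1)\bigr)B(x_1,x_2).
\]

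Now I would extract the conclusion by specialization. Setting all arguments equal to $1$ gives $q(1)=B(1,1)=0$. Put $a(x):=B(x,1)$, an additive function (with $a(1)=0$). Setting $x_2=y_2=1$ above (equivalently $B_4(x,y,1,1)=0$), and using $\varphi_i(1)=1$ and $B(1,1)=0$, gives $B(x,y)=(\varphi_1(x)+\varphi_2(x))a(y)+(\varphi_1(y)+\varphi_2(y))a(x)-a(xy)$, whence $q(x)=B(x,x)=2(\varphi_1(x)+\varphi_2(x))a(x)-a(x^2)$. Setting only $y_2=1$ (equivalently $B_4(x,y,z,1)=0$) gives $B(xz,y)+B(x,yz)=(\varphi_1(x)\varphi_2(y)+\varphi_1(y)\varphi_2(x))a(z)+(\varphi_1(z)+\varphi_2(z))B(x,y)$; substituting the formula just found for each occurrence of $B$, expanding $\varphi_i(xz)=\varphi_i(x)\varphi_i(z)$, and cancelling the $(\varphi_1(z)+\varphi_2(z))(\varphi_1(x)+\varphi_2(x))$-type products collapses this precisely to $(\spadesuit)$. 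For the converse, given an additive $a$ satisfying $(\spadesuit)$, I would \emph{define} $B$ and $q$ by these same formulas; $B$ is then patently symmetric and bi-additive (every summand is additive in each variable because $\varphi_1$, $\varphi_2$ and $a$ are), so $q=B^{\ast}$ is quadratic, and running the preceding reduction backwards — rewriting $(\spadesuit)$ as the identity $B(xz,y)+B(x,yz)=\cdots$ above and then, by suitable substitutions of the arguments together with the symmetry of $B$, deriving $B(xy,xy)=\varphi_1(x)\varphi_2(x)B(y,y)+\varphi_1(y)\varphi_2(y)B(x,x)$ — reproduces the functional equation for $q$.

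I expect the main obstacle to be bookkeeping rather than anything conceptual: one must carry out the symmetrizations (or the double polarization) while keeping $\varphi_1$ and $\varphi_2$ genuinely distinct — in Theorem~\ref{thm_add} the analogous quantities were a single map $\pi_2$, so all symmetrizations collapsed, whereas here $\varphi_1(xy)\varphi_2(xy)$ only splits as $\varphi_1(x)\varphi_1(y)\varphi_2(x)\varphi_2(y)$ and one must not conflate the two homomorphisms — and then the multi-term substitution reducing the specialized $B_4$-identity to the stated $(\spadesuit)$, where all the $\varphi_i$-cross-terms have to cancel. Finally, the reason the description remains incomplete, unlike Theorems~\ref{thm_mult} and~\ref{thm_add}, is that $(\spadesuit)$, with its $\varphi_i$-dependent coefficients, is not known to be equivalent to $a$ belonging to a familiar class such as $\mathscr{D}_2(\mathbb{F},\mathbb{K})$, so no analogue of \cite[Corollary~4]{GseKisVin18} is available to simplify it further.
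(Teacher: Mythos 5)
Your argument is correct and follows the paper's strategy in its essentials: both proofs attach to the degree-four monomial identity $q(x^{2})-2\varphi_{1}(x)\varphi_{2}(x)q(x)=0$ a symmetric $4$-additive map $B_{4}$, conclude $B_{4}\equiv 0$ (characteristic zero), and read off $q(1)=0$ and the representation $q(x)=2(\varphi_{1}(x)+\varphi_{2}(x))a(x)-a(x^{2})$ with $a(x)=B(x,1)$ from specializations at $1$. Where you deviate is in how $(\spadesuit)$ is reached: the paper substitutes the representation back into the diagonal identity, obtains a quartic identity for $a$, and symmetrizes a second time, whereas you get $(\spadesuit)$ in one step from the specialization $B_{4}(x,y,z,1)=0$ (equivalently, from your doubly polarized four-variable identity with $y_{2}=1$) after inserting $B(u,v)=(\varphi_{1}(u)+\varphi_{2}(u))a(v)+(\varphi_{1}(v)+\varphi_{2}(v))a(u)-a(uv)$; this substitution does collapse exactly to $(\spadesuit)$, so the shortcut is sound and arguably cleaner (one symmetrization instead of two). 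Your double-polarization identity itself is valid, since the hypothesis is assumed for all $x,y\in\mathbb{F}$ and polarization in each variable is legitimate in characteristic zero. As for the converse, which the paper asserts but does not really argue, your plan of reversing the identity $B(xz,y)+B(x,yz)=(\varphi_{1}(x)\varphi_{2}(y)+\varphi_{2}(x)\varphi_{1}(y))a(z)+(\varphi_{1}(z)+\varphi_{2}(z))B(x,y)$ genuinely works and can be made explicit: with $B$ defined by the displayed formula this identity is literally equivalent to $(\spadesuit)$, and the substitutions $(x,y,z)\mapsto(x,xy,y)$, $(x,x,y)$ and $(x,x,y^{2})$, combined with the symmetry of $B$ and the relation $(\varphi_{1}(y)+\varphi_{2}(y))^{2}-(\varphi_{1}(y^{2})+\varphi_{2}(y^{2}))=2\varphi_{1}(y)\varphi_{2}(y)$, yield $B(xy,xy)=\varphi_{1}(x)\varphi_{2}(x)B(y,y)+\varphi_{1}(y)\varphi_{2}(y)B(x,x)$, i.e.\ the required equation; so your converse sketch closes, and it supplies detail that the paper leaves implicit. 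I see no gap; only note that the symmetrizations you invoke need the usual averaging constants, which is harmless here.
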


\begin{proof}
Let us assume that the quadratic function $q\colon \mathbb{F}\to \mathbb{K}$ and the homomorphisms $\varphi_{1}, \varphi_{2}\colon \mathbb{F}\to \mathbb{K}$ satisfy  
\[
q(xy)= \varphi_{1}(x)\varphi_{2}(x)q(y)+\varphi_{1}(y)\varphi_{2}(y)q(x) 
\qquad 
\left(x, y\in \mathbb{F}\right). 
\]
If any of $\varphi_1$ and $\varphi_{2}$ were the identically zero homomorphisms, it would immediately follow that $q$ is identically zero. Thus, without loss of generality, we can assume that neither $\varphi_{1}$ nor $\varphi_{2}$ is the identically zero homomorphism. And then $\varphi_{1}(1)= \varphi_{2}(1)=1$. 

With $y=x$, we immediately obtain 
\[
q(x^{2})= 2\varphi_{1}(x)\varphi_{2}(x)q(x) 
\qquad 
\left(x\in \mathbb{F}\right). 
\]
Observe that the mapping 
\[
\mathbb{F}\ni x \longmapsto q(x^{2})- 2\varphi_{1}(x)\varphi_{2}(x)q(x) 
\]
is a generalized monomial of degree $4$. Thus there exists a symmetric and $4$-additive mapping $B_{4}\colon \mathbb{F}^{4}\to \mathbb{K}$ such that 
\[
B_{4}(x, x, x, x)= q(x^{2})- 2\varphi_{1}(x)\varphi_{2}(x)q(x) 
\qquad 
\left(x\in \mathbb{F}\right). 
\]
Indeed, this mapping is given by 
\begin{multline*}
B_{4}(x_{1}, x_{2}, x_{3}, x_{4})
\\=
\dfrac{1}{4!} \sum_{\sigma \in \mathscr{S}_{4}} \left[B(x_{\sigma(1)}x_{\sigma(2)}, x_{\sigma(3)}x_{\sigma(1)})- 2\varphi_{1}(x_{\sigma(1)})\varphi_{2}(x_{\sigma(2)})B(x_{\sigma(3)}, x_{\sigma(4)})\right]
\\
\left(x_{1}, x_{2}, x_{3}, x_{4} \in \mathbb{F}\right). 
\end{multline*}
Here $B\colon \mathbb{F}^{2}\to \mathbb{K}$ is the uniquely determined symmetric and bi-additive mapping for which we have $B(x, x)= q(x)$ for all $x\in \mathbb{F}$. 

With these notations, the equation in our statement means just that
\[
B_{4}(x, x, x, x)=0
\qquad 
\left(x\in \mathbb{F}\right). 
\]
Thus $B_{4}$ is identically zero on $\mathbb{F}^{4}$. 
Especially, 
\[
0=B_{4}(1, 1, 1, 1)= \left(2\,\varphi_{1}(1)\,\varphi_{2}(1)-1\right)\,B\left(1 , 1\right). 
\]
So $B(1, 1)=q(1)=0$. 

Identity 
\[
B_{4}(x, 1, 1, 1)=0
\qquad 
\left(x\in \mathbb{F}\right), 
\]
i.e., 
\begin{multline*}
\left(\varphi_{1}(1)\,\varphi_{2}(1)-1\right)\,B\left(x , 1\right)+\varphi_{1}(1)\,B
 \left(1 , 1\right)\,\varphi_{2}(x)+\varphi_{2}(1)\,B\left(1 , 1\right)\,\varphi_{1}(x)
 \\
 +\left(\varphi_{1}(1)\,\varphi_{2}(1)-1\right)\,B\left(1 , x\right)=0 
 \qquad 
 \left(x\in \mathbb{F}\right)
\end{multline*}
contains no information, since $\varphi_{1}(1)= \varphi_{2}(1)=1$ and $B(1, 1)=0$. 
Define the additive function $a\colon \mathbb{F}\to \mathbb{K}$ by
\[
a(x)=B(x, 1) 
\qquad 
\left(x\in \mathbb{F}\right)
\]
to deduce that the equation 
\[
B_{4}(x, x, 1, 1)=0 
\qquad 
\left(x\in \mathbb{F}\right)
\]
takes the form 
\[
q(x)= B(x, x)= 2(\varphi_{1}(x)+\varphi_{2}(x))a(x)-a(x^{2}) 
\qquad 
\left(x\in \mathbb{F}\right). 
\]
Therefore 
\begin{align*}
q(x^{2}) =& 2(\varphi_{1}(x^{2})+\varphi_{2}(x^{2}))a(x^{2})-a(x^{4}) \\
& 2(\varphi_{1}(x)^{2}+\varphi_{2}(x)^{2})a(x^{2})-a(x^{4}) 
\end{align*}
and also
\[
2\varphi_{1}(x)\varphi_{2}(x)q(x)=
2\varphi_{1}(x)\varphi_{2}(x) \left(2(\varphi_{1}(x)+\varphi_{2}(x))a(x)-a(x^{2})\right)
\]
holds for all $x\in \mathbb{F}$. 
This means that the additive function $a$ has to fulfill
\[
2(\varphi_{1}(x)^{2}+\varphi_{2}(x)^{2})a(x^{2})-a(x^{4}) =
2\varphi_{1}(x)\varphi_{2}(x) \left(2(\varphi_{1}(x)+\varphi_{2}(x))a(x)-a(x^{2})\right)
\qquad 
\left(x\in \mathbb{F}\right). 
\]
In other words, we have 
\begin{multline*}
a\left(x^4\right)+\left(-2\,\varphi_{2}(x)^2-2\,\varphi_{1}(x)\,\varphi_{2}(x)-2\,\varphi_{
 1}(x)^2\right)\,a\left(x^2\right)
 \\
 +\left(4\,\varphi_{1}(x)\,\varphi_{2}(x)^2+4\,\varphi
 _{1}(x)^2\,\varphi_{2}(x)\right)\,a\left(x\right)=0
 \qquad 
\left(x\in \mathbb{F}\right). 
\end{multline*}
Again, the left-hand side of this equation, as a function of the variable $x$ is a generalized monomial of degree $4$. With the application of the symmetrization method, we obtain that $a(1)=0$, and the above equation is equivalent to 
\begin{align*}
2\,&a\left(x\,y\,z\right)
\\
&-\left(\varphi_{1}(x)+\varphi_{2}(x)\right)\,a\left(y \,z\right) 
-\left(\varphi_{1}(y)+\varphi_{2}(y)\right)\,a\left(x\,z\right) 
-\left(\varphi_{1}(z)+\varphi_{2}(z)\right)\,a\left(x\,y\right)\\
&+ \left(\varphi_{1}(x)\,\varphi_{2}(y)+\varphi_{2}(x)\,\varphi_{1}(y)\right)\,a\left(z\right)
+ \left(\varphi_{1}(x)\,\varphi_{2}(z)+\varphi_{2}(x)\,\varphi_{1}(z)\right)\,a\left(y\right)\\
&+ \left(\varphi_{1}(y)\,\varphi_{2}(z)+\varphi_{2}(y)\,\varphi_{1}(z)\right)\,a\left(x\right) =0
\end{align*}
for all $x, y, z\in \mathbb{F}$ which proves the statement. 
\end{proof}

\begin{rem}
If $\varphi_{1}= \varphi_{2}= \mathrm{id}$ in equation $(\spadesuit)$, then we have 
\[
a(x^{3})-6xa(x^2)+3x^{2}a(x)=0
\qquad 
\left(x\in \mathbb{F}\right)
\]
and $a(1)=0$ for the additive function $a\colon \mathbb{F}\to \mathbb{K}$. In view of \cite[Corollary 2]{GseKisVin18} we deduce that $a\in \mathscr{D}_{2}(\mathbb{F}, \mathbb{K})$. 

Similarly, if $\varphi\colon \mathbb{F}\to \mathbb{K}$ is a non-identically zero homomorphism and we take $\varphi_{1}= \varphi_{2}= \varphi$, then equation $(\spadesuit)$ reduces to 
\[
a(x^{3})-6\varphi(x)a(x^2)+3\varphi(x)^{2}a(x)=0
\qquad 
\left(x\in \mathbb{F}\right). 
\]
This means that the mapping $d\colon \mathbb{F}\to \mathbb{K}$ defined by 
\[
d(x)= \varphi^{-1}(a(x))
\qquad
\left(x\in \mathbb{F}\right)
\]
satisfies 
\[
d(x^{3})-6xd(x^2)+3x^{2}d(x)=0
\qquad 
\left(x\in \mathbb{F}\right). 
\]
So $a= \varphi \circ d$ with an appropriate $d\in \mathscr{D}_{2}(\mathbb{F}, \mathbb{K})$. 

These two special cases allow us to conclude that in the general case (see identity $(\spadesuit)$), the function $a$ can be represented with the help of a second-order derivation and homomorphisms. Therefore, we formulate the following open problem. After answering this question, Proposition \ref{prop_moment1} will take on a more compact form.
\end{rem}

\begin{opp}
Let $\mathbb{K}$ be a field of characteristic zero, $\mathbb{F}\subset \mathbb{K}$ be a subfield and $\varphi_{1}, \varphi_{2}\colon \mathbb{F}\to \mathbb{K}$ be non-identically zero homomorphisms. Determine all those additive functions $a\colon \mathbb{F}\to \mathbb{K}$ such that $a(1)=0$ and 
\begin{align*}
2\,&a\left(x\,y\,z\right)
\\
&-\left(\varphi_{1}(x)+\varphi_{2}(x)\right)\,a\left(y \,z\right) 
-\left(\varphi_{1}(y)+\varphi_{2}(y)\right)\,a\left(x\,z\right) 
-\left(\varphi_{1}(z)+\varphi_{2}(z)\right)\,a\left(x\,y\right)\\
&+ \left(\varphi_{1}(x)\,\varphi_{2}(y)+\varphi_{2}(x)\,\varphi_{1}(y)\right)\,a\left(z\right)
+ \left(\varphi_{1}(x)\,\varphi_{2}(z)+\varphi_{2}(x)\,\varphi_{1}(z)\right)\,a\left(y\right)\\
&+ \left(\varphi_{1}(y)\,\varphi_{2}(z)+\varphi_{2}(y)\,\varphi_{1}(z)\right)\,a\left(x\right) =0
\end{align*}
for all $x, y, z\in \mathbb{F}$. 
\end{opp}

\subsection*{Quadratic functions as solutions of polynomial equations}

\begin{thm}
Let $\mathbb{K}$ be a field of characteristic zero and $\mathbb{F}\subset \mathbb{K}$ be a subfield. Let $r$ be a positive integer and suppose that for all multi-index $\alpha\in \mathbb{N}^{r}$ the mapping $q_{\alpha}\colon \mathbb{F}\to \mathbb{K}$ is quadratic such that 
\[
q_{0}(x)= x^2 
\qquad 
\left(x\in \mathbb{F}\right). 
\]
Assume further that we have 
\[
q_{\alpha}(xy)= \sum_{\beta \leq \alpha}\binom{\alpha}{\beta}q_{\beta}(x)q_{\alpha-\beta}(y) 
\qquad 
\left(x, y\in \mathbb{F}^{\times}\right)
\]
for all multi-index $\alpha\in \mathbb{N}^{r}$. 
Then there exists a sequence of $\mathbb{K}$-valued `additive functions' $a= (a_{\alpha})_{\alpha\in \mathbb{N}^{r}}$ on $\mathbb{F}^{\times}$ such that 
\[
q_{\alpha}(x)= B_{\alpha}(a(x))x^2
\]
holds for all $x\in \mathbb{F}$ and for each multi-index $\alpha\in \mathbb{N}^{r}$. Here the fact that $a_{\alpha}\colon \mathbb{F}\to \mathbb{K}$ is an `additive function' means that there exists $d_{\alpha}\in \mathscr{D}_{2}(\mathbb{F}, \mathbb{K})$ such that 
\[
a_{\alpha}(x)= 4 \frac{d_{\alpha}(x)}{x}-\frac{d_{\alpha}(x^{2})}{x^{2}} 
\qquad 
\left(x\in \mathbb{F}^{\times}\right). 
\]
\end{thm}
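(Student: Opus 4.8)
The plan is to push everything onto the multiplicative structure of $\mathbb{F}$ and then appeal to the results already proved in this section. First I would divide through by $\pi_{2}$: for each multi-index $\alpha\in\mathbb{N}^{r}$ put $F_{\alpha}:=q_{\alpha}/\pi_{2}$ on $\mathbb{F}^{\times}$. Since $q_{0}=\pi_{2}$ we get $F_{0}\equiv 1$, and dividing the hypothesised identity by $(xy)^{2}=x^{2}y^{2}$ and distributing turns it into
\[
F_{\alpha}(xy)=\sum_{\beta\leq\alpha}\binom{\alpha}{\beta}F_{\beta}(x)F_{\alpha-\beta}(y)\qquad\left(x,y\in\mathbb{F}^{\times}\right).
\]
Endowing $\mathbb{F}^{\times}$ with the discrete topology, every $F_{\alpha}$ is continuous, so $(F_{\alpha})_{\alpha\in\mathbb{N}^{r}}$ is a generalized moment sequence of rank $r$ on the commutative group $(\mathbb{F}^{\times},\cdot)$ whose generating function $F_{0}$ is the identically one exponential.

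Next I would invoke Theorem \ref{thm_moment}: there exist an exponential $m\colon\mathbb{F}^{\times}\to\mathbb{K}$ and a sequence $a=(a_{\alpha})_{\alpha\in\mathbb{N}^{r}}$ of $\mathbb{K}$-valued additive functions on $\mathbb{F}^{\times}$ with $F_{\alpha}(x)=B_{\alpha}(a(x))m(x)$. Taking $\alpha=0$ and using $B_{0}\equiv 1$ gives $m=F_{0}\equiv 1$, hence $F_{\alpha}=B_{\alpha}(a(\cdot))$, i.e.\ $q_{\alpha}(x)=B_{\alpha}(a(x))x^{2}$ for $x\in\mathbb{F}^{\times}$; at $x=0$ both sides vanish (a quadratic function vanishes at $0$, and the factor $x^{2}$ annihilates the right-hand side), so the formula holds on all of $\mathbb{F}$. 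One may take $a_{0}=0$.

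It then remains to show that each $a_{\alpha}$ is an ``additive function'' in the required sense, i.e.\ that $\widetilde{q}_{\alpha}:=a_{\alpha}\cdot\pi_{2}$ is the restriction to $\mathbb{F}^{\times}$ of a quadratic function on $\mathbb{F}$. Granting this, the additivity of $a_{\alpha}$ on $\mathbb{F}^{\times}$ yields $\widetilde{q}_{\alpha}(xy)=x^{2}\widetilde{q}_{\alpha}(y)+y^{2}\widetilde{q}_{\alpha}(x)$, so Theorem \ref{thm_add} applies to $\widetilde{q}_{\alpha}$ and produces $d_{\alpha}\in\mathscr{D}_{2}(\mathbb{F},\mathbb{K})$ with $\widetilde{q}_{\alpha}(x)=4xd_{\alpha}(x)-d_{\alpha}(x^{2})$, which is exactly the asserted form of $a_{\alpha}$. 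I would prove the quadraticity of $\widetilde{q}_{\alpha}$ by induction on $|\alpha|$. The cases $|\alpha|\leq 1$ are immediate, since then $B_{\alpha}(a)=a_{\alpha}$ and $\widetilde{q}_{\alpha}=q_{\alpha}$. For the inductive step, the recursion for the (multivariate) Bell polynomials writes $B_{\alpha}(a)=a_{\alpha}+R_{\alpha}$, where $R_{\alpha}$ is a fixed rational-coefficient polynomial in the $a_{\gamma}$ with $0<\gamma<\alpha$ (a sum of monomials $a_{\gamma_{1}}\cdots a_{\gamma_{s}}$ with $s\geq 2$, $\gamma_{i}\neq 0$, $\gamma_{1}+\cdots+\gamma_{s}=\alpha$, so in particular $|\gamma_{i}|<|\alpha|$). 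Thus the identity above rewrites as
\[
\widetilde{q}_{\alpha}=q_{\alpha}-\pi_{2}\cdot R_{\alpha}\left(a_{\gamma}:\gamma<\alpha\right),
\]
where $q_{\alpha}$ is quadratic by hypothesis, and the point is to see that $\pi_{2}\cdot R_{\alpha}$ is quadratic too. Equivalently, one substitutes the already-known representations of the $q_{\gamma}$ with $\gamma<\alpha$ into the $\alpha$-th moment equation to obtain a single functional equation for $a_{\alpha}$ alone; as in the proof of Theorem \ref{thm_add}, I would then apply the symmetrization method and a result of the type of \cite[Corollary 4]{GseKisVin18} to conclude that $\widetilde{q}_{\alpha}$ is the trace of a symmetric bi-additive map, closing the induction.

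The main obstacle is precisely this last step. It is \emph{not} true in general that a product of quadratic functions each satisfying $q(xy)=x^{2}q(y)+y^{2}q(x)$, divided by a power of $\pi_{2}$, is again quadratic; hence the quadraticity of $\pi_{2}\cdot R_{\alpha}$ cannot be read off from the lower $\widetilde{q}_{\gamma}$ in isolation, but must genuinely exploit the moment relation binding $q_{\alpha}$ to the $q_{\gamma}$ with $\gamma<\alpha$. Converting that relation into a polynomial functional equation for $a_{\alpha}$ and showing its only solutions are the asserted ones — while carrying the Bell-polynomial bookkeeping throughout — is the technical heart of the argument; everything else is the routine division-by-$\pi_{2}$ reduction together with direct appeals to Theorems \ref{thm_moment} and \ref{thm_add}.
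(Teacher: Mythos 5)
Your route is the paper's route: recognize that the hypotheses say exactly that $(q_{\alpha})_{\alpha\in\mathbb{N}^{r}}$ is a generalized moment sequence of rank $r$ on the group $(\mathbb{F}^{\times},\cdot)$ with generating exponential $q_{0}=\pi_{2}$ (equivalently, that $(q_{\alpha}/\pi_{2})_{\alpha}$ is a moment sequence with generating function $1$), apply Theorem \ref{thm_moment} to obtain $q_{\alpha}(x)=B_{\alpha}(a(x))x^{2}$ with $a_{\alpha}$ additive on $\mathbb{F}^{\times}$, and then identify each $a_{\alpha}$ through Theorem \ref{thm_add}. Up to and including the representation $q_{\alpha}=B_{\alpha}(a(\cdot))\,\pi_{2}$ your argument coincides with the paper's, and your extra remarks (pinning down $m\equiv 1$ via $B_{0}\equiv 1$, extending the identity to $x=0$) are harmless refinements.

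The divergence is only at the last step, and there your proposal is, by your own admission, not a proof: you correctly observe that Theorem \ref{thm_add} is stated for a \emph{quadratic} function on $\mathbb{F}$ whose quotient by $\pi_{2}$ is additive on $\mathbb{F}^{\times}$, so to invoke it for $a_{\alpha}$ you would need $\widetilde{q}_{\alpha}=a_{\alpha}\cdot\pi_{2}$ to be quadratic, and you leave the inductive Bell-polynomial argument for this unfinished (it is immediate only for $\left|\alpha\right|\leq 1$, where $\widetilde{q}_{\alpha}=q_{\alpha}$). You should know, however, that the paper supplies no such machinery either: its proof passes directly from ``$a_{\alpha}(xy)=a_{\alpha}(x)+a_{\alpha}(y)$ on $\mathbb{F}^{\times}$'' to ``thus, by Theorem \ref{thm_add}, $a_{\alpha}(x)=4\,d_{\alpha}(x)/x-d_{\alpha}(x^{2})/x^{2}$ for some $d_{\alpha}\in\mathscr{D}_{2}(\mathbb{F},\mathbb{K})$,'' with no verification of the quadraticity hypothesis for $\left|\alpha\right|\geq 2$ and no Bell-polynomial bookkeeping. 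So the obstacle you flagged is precisely the point the paper treats as immediate; you have not missed an idea that the paper contains, but as it stands your proposal (unlike the paper's more confident one-line invocation) explicitly stops short of a complete argument, and closing it would require either justifying that each $\widetilde{q}_{\alpha}$ is quadratic (e.g.\ by the induction you sketch, exploiting the $\alpha$-th moment relation together with the already-identified lower-order terms) or applying Theorem \ref{thm_add} directly as the paper does.
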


\begin{proof}
Let $r$ be a positive integer and suppose that for all multi-index $\alpha\in \mathbb{N}^{r}$ the mapping $q_{\alpha}\colon \mathbb{F}\to \mathbb{K}$ is quadratic such that 
\[
q_{0}(x)= x^2 
\qquad 
\left(x\in \mathbb{F}\right)
\]
and we have  also  
\[
q_{\alpha}(xy)= \sum_{\beta \leq \alpha}\binom{\alpha}{\beta}q_{\beta}(x)q_{\alpha-\beta}(y) 
\qquad 
\left(x, y\in \mathbb{F}^{\times}\right)
\]
for all multi-index $\alpha\in \mathbb{N}^{r}$. 

In view of Definition \ref{dfn_moment}, this means that the sequence of functions $(q_{\alpha})_{\alpha\in \mathbb{N}^{r}}$ forms a generalized moment sequence of rank $r$ on the group $\mathbb{F}^{\times}$, corresponding to the `exponential' $q_{0}= x^{2}\; (x\in \mathbb{F})$. Equivalently, this means that the sequence of functions $\left(q_{\alpha}/q_{0}\right)_{\alpha\in \mathbb{N}^{r}}$ forms a moment sequence of rank $r$.  Thus, we obtain from Theorem \ref{thm_moment}, that there exists  a sequence of $\mathbb{K}$-valued additive functions $a= (a_{\alpha})_{\alpha\in \mathbb{N}^{r}}$ 
 such that  for every multi-index $\alpha$ in $\mathbb{N}^{r}$ and $x$ in $\mathbb{F}^{\times}$ we have    \[
  \frac{q_{\alpha}(x)}{q_{0}(x)}=B_{\alpha}(a(x)).  
 \]
 Note that for all multi-index $\alpha\in \mathbb{N}^{r}$ `additivity' on the group $\mathbb{F}^{\times}$ means that we have 
 \[
 a_{\alpha}(xy)= a_{\alpha}(x)+a_{\alpha}(y) 
 \qquad 
 \left(x\in F^{\times}\right). 
 \]
 Thus, by Theorem \ref{thm_add} we have 
 \[
 a_{\alpha}(x)= 4 \frac{d_{\alpha}(x)}{x}-\frac{d_{\alpha}(x^{2})}{x^{2}}  
 \qquad 
 \left(x\in \mathbb{F}^{\times}\right)
 \]
 with an appropriate $d_{\alpha}\in \mathscr{D}_{2}(\mathbb{F}, \mathbb{K})$. 
\end{proof}

\begin{ackn}
The research of E.~Gselmann has been supported by project no.~K134191 that has been
implemented with the support provided by the National Research, Development and Innovation Fund of Hungary, financed under the K{\_}20 funding scheme.
\end{ackn}


\vspace{2cm}
\noindent
\textbf{Eszter Gselmann} \\
Department of Analysis\\
University of Debrecen\\
P.O. Box 400\\
H-4002 Debrecen\\
Hungary\\
e-mail: gselmann@science.unideb.hu\\
ORCID: 0000-0002-1708-2570
\vspace{1cm}

\noindent
\textbf{Mehak Iqbal}\\
Doctoral School of Mathematical and Computational Sciences\\
University of Debrecen\\
P.O. Box 400\\
H-4002 Debrecen\\
Hungary\\
e-mail: iqbal.mehak@science.unideb.hu\\
ORCID: 0000-0002-9442-3199

\end{document}